\documentclass{article}
\usepackage{graphicx}
\usepackage[utf8]{inputenc}
\usepackage[T1]{fontenc}
\usepackage{amsmath, amssymb, amsthm, amsfonts}
\usepackage{enumitem}
\usepackage{mathrsfs}
\usepackage{hyperref}

\theoremstyle{plain}
\newtheorem{theorem}{Theorem}[section]
\newtheorem{lemma}[theorem]{Lemma}

\newtheorem{claim}[theorem]{Claim}

\theoremstyle{definition}
\newtheorem{definition}[theorem]{Definition}

\newtheorem{question}[theorem]{Question}

\newcommand{\forces}{\Vdash}
\newcommand{\forceP}{\mathbb{P}}
\newcommand{\forceQ}{\mathbb{Q}}

\newcommand{\code}{\operatorname{Code}}
\newcommand{\Lsharp}{L^{\#}}

\title{On \texorpdfstring{$\mathbf{\Sigma}^1_3$}{Sigma13}- and \texorpdfstring{$\Sigma^1_4$}{Sigma14}-uniformization}
\author{Stefan Hoffelner \footnote{The author's research was funded in whole by the Austrian Science Fund (FWF) Grant-DOI 10.55776/P37228. For the purpose of open access, the authors have applied a CC BY public copyright license to any Author Accepted Manuscript version arising from this submission.}}
\date{
    TU Wien \\
    March 2026
}

\begin{document}

\maketitle

\begin{abstract}
    Assuming the consistency of $\mathsf{ZFC}$, we construct a model of set theory in which the boldface $\mathbf{\Sigma}^1_3$-uniformization property holds, yet the lightface $\Sigma^1_4$-uniformization property fails, separating these two principles for the first time. We also indicate how to create a universe where $\Sigma^1_3$-uniformization holds, but $\Sigma^1_4$-uniformization fails using inner models with large cardinals. 
\end{abstract}

\section{Introduction}

The uniformization problem---asking whether every relation in a given pointclass contains a function with the same domain in the same pointclass---has been a central driving force in descriptive set theory since its inception around 1930. While the Axiom of Choice guarantees the existence of uniformizing functions in the abstract, the definability of such functions is a much more delicate matter. 

At the foundational levels of the projective hierarchy, the situation is entirely resolved within standard set theory. A classical result of Kondo \cite{Kondo1939} establishes that every $\mathbf{\Pi}^1_1$ relation can be uniformized by a $\mathbf{\Pi}^1_1$ function. This readily implies that the $\mathbf{\Sigma}^1_2$-uniformization property also holds in $\mathsf{ZFC}$. 

However, for pointclasses beyond the second level of the projective hierarchy, $\mathsf{ZFC}$ alone is insufficient to decide uniformization. The behavior of uniformization at the third level and above heavily depends on the underlying structural properties of the universe, historically dividing into two mutually exclusive paradigms: the existence of good definable wellorders of the reals and the assumption of strong determinacy hypotheses.

In canonical inner models, such as G\"{o}del's constructible universe $L$ or larger core models, the reals admit a good projective wellordering. For instance, in $L$, the canonical wellordering of the reals is $\Delta^1_2$, which  implies that the $\mathbf{\Sigma}^1_n$-uniformization property holds for all $n \geq 2$. More generally, if a model admits a $\Delta^1_n$ good wellordering of the reals, then by an observation of J. Addison $\mathbf{\Sigma}^1_k$-uniformization has to be true for all $k \geq n$. In all prior known models where the $\mathbf{\Sigma}^1_3$-uniformization property was established, it was achieved via the presence of a good projective wellorder that inextricably carried the ${\Sigma}^1_4$-uniformization property along with it. 

Conversely, under Projective Determinacy ($\mathsf{PD}$), or equivalently, as famously shown by D. Martin, J. Steel and H. Woodin (\cite{MS},\cite{muller2020mice}), under sufficiently large cardinals the picture is radically different. The periodicity theorems of Y. Moschovakis \cite{moschovakis2025descriptive} imply that uniformization alternates: at the third level, it is the $\mathbf{\Pi}^1_3$ relations that uniformize, while $\mathbf{\Sigma}^1_3$-uniformization explicitly fails. At the fourth level, $\mathbf{\Sigma}^1_4$-uniformization holds and so on. 

This historical dichotomy has left open a natural structural question regarding the independence of these projective levels from one another. Specifically, must the $\mathbf{\Sigma}^1_3$-uniformization property inevitably entail ${\Sigma}^1_4$-uniformization?

In this paper, we answer this question in the negative.  Our main theorem demonstrates that it is possible to  preserve boldface uniformization at the third level while simultaneously arranging for its failure at the lightface fourth, all without the assumption of large cardinals.

The required construction is considerably more delicate than one might initially expect, a difficulty anticipated by recent developments in the field. In \cite{HOFFELNER2025110272}, a flexible coding technique is introduced that forces the $\Sigma^1_{n+2}$-uniformization property for all $n \in \omega$ simultaneously. The robustness of this technique presents a significant obstacle, effectively ruling out straightforward approaches to our main question. Indeed, any naïve attempt to separate these levels would likely be compatible with the methods of \cite{HOFFELNER2025110272}, which would inadvertently force $\Sigma^1_4$-uniformization to hold and thus invalidate the separation. Consequently, we are compelled to develop a somewhat non-linear forcing iteration---one that eschews a strict chronological progression in favor of necessary rollbacks to earlier stages. Our construction implements these non-standard features.

\begin{theorem}
    There exists a generic extension of $L$ in which the boldface $\mathbf{\Sigma}^1_3$-uniformization property holds, yet the lightface $\Sigma^1_4$-uniformization property fails.
\end{theorem}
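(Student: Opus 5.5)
We would build the model as a generic extension of $L$ by a countable support iteration $(\forceP_\alpha,\dot{\forceQ}_\alpha : \alpha<\omega_1)$, extended with some care by a coding tail, in the spirit of the coding technique of \cite{HOFFELNER2025110272}. The basic device fixes in $L$ an $\omega_1$-sequence of independent $\Diamond$-style objects, for instance Suslin trees, or stationary subsets of $\omega_1$ that can be destroyed by club shooting. A real $x$ is then ``coded'' by generically destroying the objects indexed by the bits of $x$ along some $\omega$-block. These codes are $\Sigma^1_3$-readable over $L$: ``there is a real $r$ such that every countable transitive model $M$ containing $r$ and satisfying a fixed finite fragment of $\mathsf{ZF}^-+V=L[r]$ sees the pattern coding $x$.''

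To obtain boldface $\mathbf{\Sigma}^1_3$-uniformization we use a bookkeeping $F$ that enumerates every pair $(\varphi, p)$, where $\varphi$ is $\Sigma^1_3$ and $p$ is a real parameter, together with every $x$. At each stage we find the least witness $y$ in the $L$-order of names: the least $y$ such that some extension forces $\varphi(x,y,p)$ to hold in the final model in an upward-absolute way. We then code the triple $(\varphi,p,x,y)$ as above. The uniformizing function for $\varphi(\cdot,\cdot,p)$ is then defined by ``$y$ is the value of $x$ iff $\varphi(x,y,p)$ and $(\varphi,p,x,y)$ is coded.'' This is $\Sigma^1_3$ in $p$. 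Moreover, because the iteration is built so that at most one $y$ per $(\varphi,p,x)$ is ever coded, the relation is a function. The single-valuedness is arranged by an ``anti-coding'' step that, for each competitor $y'$, ensures the corresponding pattern is never written. Preservation of $\aleph_1$ and of the unused coding objects is proved by the usual argument: countable support, together with the fact that each factor is either $\omega$-distributive or ccc on a product of the remaining trees.

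To make lightface $\Sigma^1_4$-uniformization fail, we diagonalize against every $\Sigma^1_4$ formula $\psi(x,y)$. We fix a specific $\Pi^1_3$ (hence $\Sigma^1_4$) relation $R(x,y)$ whose sections are arranged, via codes, to be nonempty. For example, $R(x,y)$ could say ``$y$ codes $x$ and $y$ is not $L$-least among such,'' so that it cannot be uniformized by a $\Sigma^1_4$ function. The plan is a rollback argument. Whenever the bookkeeping proposes a candidate $\Sigma^1_4$ uniformizer $\psi_n$ and an $x_n$ on which $\psi_n$ picks some $y$, we return to an earlier stage of the iteration and add a new code. That code either makes $R(x_n,y)$ false or adds a second $y'$ that $\psi_n$ must also select, which breaks functionality. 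The rollback amounts to choosing the iteration as a limit of trees of attempts rather than a linear sequence. The $\Sigma^1_3$-coding part must be shown to be invariant under such rollbacks, so that uniformizing witnesses already chosen stay correct in the final model. We get this because the $\Sigma^1_3$ witnesses are decided by stage-absolute codes, whereas $\Sigma^1_4$ truth involves a further universal quantifier over future stages.

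The main obstacle is this tension. We must ensure that the $\Sigma^1_3$ uniformizing codes force single-valuedness over all possible continuations, which is a $\Pi^1_3$ demand on the final model. At the same time, the $\Sigma^1_4$ diagonalization needs continuations that alter $\Pi^1_3$ facts. First we would try to separate the two using disjoint families of coding trees. We would then argue, via a homogeneity or automorphism lemma for the product of unused trees, that rolling back affects only $\Pi^1_3$ statements whose parameters mention the diagonalization block. Boldface $\Sigma^1_3$ facts about the uniformization block would then be preserved by $\Sigma^1_3$ upward absoluteness between the relevant intermediate models and the final model.
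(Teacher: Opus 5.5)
There is a genuine gap, and it lies exactly where you place the main obstacle: the $\Sigma^1_4$ diagonalization and its interaction with the $\Sigma^1_3$ codes are stated as intentions, not arguments.

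First, your diagonalization step has no working mechanism. At a countable stage you cannot know what $\psi_n$ ``picks'' at $x_n$, because $\Sigma^1_4$ facts are not preserved to the final model. Nor can a new code force an arbitrary $\Sigma^1_4$ formula to ``also select'' some $y'$. Your target $R$ (``$y$ codes $x$ and is not $L$-least'') is also too vague to see why its sections are nonempty or why it resists uniformization. The paper supplies three ingredients you are missing:
\begin{itemize}
\item a designated relation tailored to the coding, $\sigma(v_0,v_1)\equiv\exists a_0\forall a_1\,\bigl((v_0,v_1,a_0,a_1)\text{ is not coded into }\vec{S}^1\bigr)$, whose points are created as Cohen reals $c_i(x,m)$ with witnesses $a_0^i$ and protected by a blacklist $E_\beta$;
\item the restriction of all continuations to $\beta$-suitable forcings (weakly allowable, never coding a tuple extending a member of $E_\beta$), which turns ``true in the final model'' into ``forced by every suitable continuation'';
\item a dichotomy. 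Either some suitable forcing adds $a_1$ with $\neg\psi_m(x,y,a_0,a_1)$, and one forces with it. Or one is stuck, i.e.\ every suitable continuation forces $\forall a_1\,\psi_m(x,y,a_0,a_1)$. In the stuck case one returns to $W[G_\gamma]$, before the values for $x$ were created, and replaces $\mathbb{P}_{\gamma,\beta}$ by its square $\mathbb{P}_{\gamma,\beta}\times\mathbb{P}_{\gamma,\beta}$. Its two mutually generic sides yield $y^0\neq y^1$, both robustly in $(A_m)_x$.
\end{itemize}
So the second value comes from the symmetry of a product of a segment with itself, not from adding a code. This also requires the square to stay inside the admissible class. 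The paper gets that from finite support and Knaster factors; your countable support scheme would need a separate argument.

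Second, the rollback does damage the $\Sigma^1_3$ side, and your invariance argument addresses the wrong danger. Since $\Sigma^1_3$ codes are upward absolute, codes already written are never lost; the problem is \emph{duplication}. The squared segment contains uniformization stages in both copies. Each copy runs its own least-witness choice and anti-coding bookkeeping in a different generic, so two distinct witnesses can be coded for the same $x$, and coding areas get reused. No homogeneity lemma for the unused trees prevents this.

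The paper contains the damage rather than avoiding it:
\begin{itemize}
\item squaring happens only countably often, hence below a countable stage $\beta^*$;
\item the countably many doubly used coding areas are discarded, which is why weakly allowable forcings and the weakened no-unwanted-codes lemma appear;
\item the countably many reals processed before $\beta^*$ are patched by an extra real parameter $R^*$ listing chosen witnesses.
\end{itemize}
That patch is exactly why the theorem is only boldface. Your uniformizing definition is $\Sigma^1_3$ in $p$ alone, so for lightface $\varphi$ it would give lightface $\Sigma^1_3$-uniformization over $L$, which the paper leaves open. Moreover, by the paper's remark in its final section, the same scheme run with good-wellorder codes in place of uniformization codes would give a good $\Sigma^1_3$ wellorder together with a failure of $\Sigma^1_4$-uniformization. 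That contradicts Addison's observation. So the invariance you assert cannot come for free: you need the $R^*$ patch or a genuinely new idea.
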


This result separates these two properties at the projective levels, thereby producing a behaviour distinct from both standard inner models and determinacy hypotheses.

The proof of the above theorem uses arguments which seem to be insufficient to separate the lightface $\Sigma^1_3$-uniformization from $\Sigma^1_4$-uniformization for reasons which we shall discuss. A separation of $\Sigma^1_3$-uniformization from $\Sigma^1_4$-uniformization can be achieved nevertheless if we work over $L^{\#}$, the least inner model where every set has a sharp as the ground model. The argument is utilizing the well-known argument of A. L\'evy (see \cite{levy1965definability}) and the fact that the good wellorder of $L^{\#}$ and its generic absoluteness suffice to preserve $\Sigma^1_3$-uniformization in generic extensions of $L^{\#}$.

\begin{theorem}
    Working over $L^{\#}$ there is a generic extension where $\Sigma^1_3$ uniformization is true, yet $\Sigma^1_4$ uniformization fails. In fact there is an $\Pi^1_3$-set which can not be uniformized by an ordinal definable function.
\end{theorem}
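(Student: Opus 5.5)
Working over $\Lsharp$ as the ground model, the plan is to add, via a L\'evy-style symmetric argument, a $\Pi^1_3$ relation whose sections are nonempty but definably indistinguishable. The $\Sigma^1_3$-uniformization then survives because of the good wellorder of $\Lsharp$ together with generic $\Sigma^1_3$-absoluteness. The first thing I will set up is the fact the introduction alludes to: in $\Lsharp$ every set has a sharp, so every set-generic extension $\Lsharp[G]$ is $\Sigma^1_3$-absolute for $\Lsharp$. Indeed, $\Sigma^1_3$-absoluteness for set forcing follows from every set having a sharp. Moreover $\Lsharp$ carries a good $\Sigma^1_3$ wellorder of its reals; this comes from the comparison of the mice $L[x^{\#\ldots}]$ below $0^{\P}$-like levels, so that each real of $\Lsharp$ appears in a countable sharp-closed mouse.

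\textbf{Step 1 ($\Sigma^1_3$-uniformization in $\Lsharp[G]$).} Let $A(x,y)$ be $\Sigma^1_3$ with $\exists y\,A(x,y)$ for $x\in\mathrm{dom}(A)$. For a real $x$ in $\Lsharp[G]$, I use the fact that $x$ is generic over $\Lsharp$ for a small forcing. I then define $f(x)$ as the $<$-least pair $(\dot y,p)$ in the canonical wellorder of $\Lsharp$, where $\dot y$ is a name in the least mouse $M$ containing $x$ and the relevant generic, $p$ forces $A(x,\dot y)$, and $y=\dot y^{g}$ for the generic $g$ computed canonically from $x$. A cleaner route, which I would try first, is this: pick witnesses $y$ of the form $y\in L[x]^{\#}$-least, i.e. $y$ constructible from $x$ together with finitely many sharps. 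Such a $y$ exists by $\Sigma^1_3$-absoluteness between $\Lsharp[x]$ and $\Lsharp[G]$, together with the fact that $\Lsharp[x]$ has a $\Sigma^1_3(x)$ good wellorder (relativized core model). Then $f(x)=$ the least witness in that wellorder is $\Sigma^1_3$, because goodness lets us bound the quantifier over predecessors and absoluteness keeps the complexity at $\Sigma^1_3$. The key point to verify is that the wellorder of the reals of $\Lsharp[x]$ is $\Sigma^1_3(x)$ uniformly in $x$, and that $\Lsharp[x]$ reals are $\Sigma^1_3$-correctly identified inside $\Lsharp[G]$.

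\textbf{Step 2 (failure of $\Sigma^1_4$, L\'evy's argument).} I force with a product or finite-support iteration of Cohen reals $\langle c_n:n\in\omega\rangle$, and then collapse enough to make the unordered set $\{c_n\}$ projectively definable without any individual $c_n$ being so. Concretely, I code the set $C=\{c_n:n\in\omega\}$ so that $x\in C$ is $\Pi^1_3$ (or $\Sigma^1_3$). This can be done, for instance, by arranging that $x\in C$ iff $x$ is Cohen-generic over $\Lsharp$ of a specific form witnessed by a sharp-mouse, which is $\Pi^1_3$ over $\Lsharp[G]$. The relation $R(m,x)\iff x\in C$ (constant domain) is then $\Pi^1_3$ and nonempty. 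By homogeneity of the forcing under permutations of the indices, any ordinal-definable function selecting an element of $C$ would be fixed by all automorphisms, while the automorphisms move every $c_n$, a contradiction. Hence no $\mathrm{OD}$ uniformization exists, and in particular no $\Sigma^1_4$ one.

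The main obstacle is making $C$ definable at level $\Pi^1_3$ while preserving enough symmetry, and doing so compatibly with Step 1. The definition must refer only to $\Lsharp$-parameters fixed by the automorphisms, e.g. ``$x$ is Cohen over $\Lsharp$ and $x\in\Lsharp[G]$ modulo a coding'', and must not add a real that breaks the symmetry. I would use an automorphism-invariant almost-disjoint coding of the whole set $C$ into a single object, homogeneous under permutations of $\omega$.
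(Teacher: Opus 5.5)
Your Step 1 (the ``cleaner route'') is essentially the paper's argument: $\Lsharp[x]=\Lsharp_x$, the uniform $\Sigma^1_3(x)$-good wellorder coming from sound simple $x$-mice, and $\Lsharp[x]\prec_{\Sigma^1_3}\Lsharp[G]$.

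The genuine gap is Step 2. You reduce the failure of $\Sigma^1_4$-uniformization to producing a countable, lightface $\Pi^1_3$ set $C$ with no OD element. You then leave exactly this as ``the main obstacle'', and none of the tools you suggest produces it:
\begin{itemize}
\item The property ``$x$ is Cohen over $\Lsharp$'' holds for uncountably many reals besides the $c_n$, for instance finite variants of the $c_n$ and other Cohen reals in $\Lsharp[c_0]$. In fact, in $\Lsharp[\langle c_n\mid n\in\omega\rangle]$ the set $\{c_n\mid n\in\omega\}$ is not even OD: flipping one bit of $c_0$ outside the domain of a given condition is an automorphism that fixes the condition but moves the set.
\item An almost-disjoint code for $C$ is just a new real, and $C$ already had real codes. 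Coding helps only with a lightface decoding into some canonical structure together with a no-unwanted-codes lemma over $\Lsharp$. That is the whole difficulty, and you do not supply it.
\item Collapsing cardinals does not make any set definable.
\end{itemize}
Moreover, your alternative ``(or $\Sigma^1_3$)'' is impossible. By Step 1, every nonempty lightface $\Sigma^1_3$ set of reals has a $\Delta^1_3$, hence OD, member, so Steps 1 and 2 would contradict each other.

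The missing idea is that neither coding nor a countable definable set is needed. The paper forces with $\prod_{i<\omega_1}\mathbb{C}$ over $\Lsharp$ and uses the relation $\phi(x,y)\iff y\notin\Lsharp[x]$.
\begin{itemize}
\item \emph{Complexity.} Since $\Lsharp[x]=\Lsharp_x$ and ``$z$ codes a simple $x$-mouse'' is $\Pi^1_2$, the relation reads $\forall z\,(z \text{ codes a simple } x\text{-mouse} \rightarrow y\notin z)$, which is $\Pi^1_3$.
\item \emph{Full domain.} Each real $x$ lies in $\Lsharp[g\upharpoonright I]$ for some countable $I$. Any Cohen real $g_i$ with $i\notin I$ then witnesses that $x$ is in the domain.
\item \emph{No OD uniformization.} $\Lsharp[g]$ is a generic extension of $\Lsharp[x]$ by a homogeneous forcing: a countable quotient times $\omega_1$ Cohen reals, which is again equivalent to adding $\omega_1$ Cohen reals. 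So every real that is OD from $x$ in $\Lsharp[g]$ already lies in $\Lsharp[x]$. An OD uniformizing function $f$ would therefore give $f(x)\in\Lsharp[x]$, contradicting $\phi(x,f(x))$.
\end{itemize}
This is L\'evy's argument. It gives the stronger OD statement directly, while your Step 1 applies unchanged to $\Lsharp[g]$.
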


\section{The Ground Model and the Coding Machinery}
To establish our separation result, we rely heavily on the foundational coding machinery developed in \cite{hoffelner2023forcing}. We use the considerably shorter version from \cite{hoffelner2026failurepi1n3reductionpresencesigma1n3separation}. In this section, we provide an exposition of the ground model $W$, the coding mechanism via independent Suslin trees, and the base framework of allowable forcings. 

\subsection{Independent Suslin Trees and Almost Disjoint Coding}
Our construction relies on the ability to generically destroy Suslin trees independently of one another.

\begin{definition}
Let $\vec{T} = (T_\alpha \mid \alpha < \kappa)$ be a sequence of Suslin trees. We say that the sequence is an \emph{independent family of Suslin trees} if for every finite set of pairwise distinct indices $e = \{e_0, e_1, \dots, e_n\} \subset \kappa$, the product $T_{e_0} \times T_{e_1} \times \dots \times T_{e_n}$ is a Suslin tree.
\end{definition}

In $L$, we can canonically define an $\omega_1$-sequence of such trees. In fact $\diamondsuit$ implies the existence of $2^{\aleph_1}$-many such Suslin trees (\cite{zakrzewski1981weak}). We just need $\aleph_1$-many of them, which however should be easily definable. The following lemma is proved in (\cite{hoffelner2026failurepi1n3reductionpresencesigma1n3separation} and also in \cite{hoffelner2025universelargecontinuumglobal}).

\begin{theorem} \label{DefinitionIndependentSequence}
Assume that $\aleph_1 = \aleph_1^L$ and that $(M, \in)$ is a transitive, $\omega_1$-containing, uncountable model of $\mathsf{ZFC}^- + \text{``}\aleph_1 \text{ exists''}$. Then there is an independent sequence $\vec{T} = (T_\alpha \mid \alpha < \omega_1)$ of $L$-Suslin trees, and the sequence $\vec{T}$ is uniformly $\Sigma_1(\{\omega_1\})$-definable over $M$. Specifically, there is a $\Sigma_1$-formula $\phi$ with $\omega_1$ as the unique parameter such that the relation $\{(t, \gamma, \eta) \mid \gamma, \eta < \omega_1 \land t \in T_\eta^\gamma\}$ is definable over $M$ using $\phi$, where $T_\eta^\gamma$ denotes the $\gamma$-th level of $T_\eta$.
\end{theorem}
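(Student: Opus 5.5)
We will construct the trees inside $L_{\omega_1}$ (that is, $L_{\omega_1^L}$) by a $\diamondsuit$-guided recursion that uses the canonical $\diamondsuit$-sequence of $L$. We then check that the recursion is absolute to any $M$ as in the statement. Since $\aleph_1 = \aleph_1^L$, we have $\omega_1^M = \omega_1 = \omega_1^L$, and $L_{\omega_1} \subseteq M$, because $M$ is transitive, contains $\omega_1$, and satisfies $\mathsf{ZFC}^-$, hence is closed under the construction of $L_\alpha$ for $\alpha<\omega_1$.

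\textbf{Step 1: the canonical $\diamondsuit$-sequence.} Take the $<_L$-least $\diamondsuit$-sequence $(a_\alpha \mid \alpha<\omega_1)$ in the usual Jensen form: $a_\alpha$ is the $<_L$-least pair $(A,C)$ with $A\subseteq \alpha$ and $C$ club in $\alpha$ such that $A\cap\beta \neq a_\beta$ for all $\beta\in C$, and $a_\alpha=\emptyset$ if there is no such pair. Each $a_\alpha$ is computed in some $L_\gamma$ with $\gamma<\omega_1$, so the map $\alpha\mapsto a_\alpha$ is $\Sigma_1$ over $L_{\omega_1}$, uniformly. Its $\diamondsuit$-property holds in $L$. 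Note that guessing sets of countable ordinals is what matters; the trees and the antichains we must seal can be coded as subsets of $\omega_1$.

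\textbf{Step 2: building the independent family level by level.} We build simultaneously all trees $T_\eta$, $\eta<\omega_1$, with underlying set a subset of $\omega_1$. Tree $T_\eta$ starts at level $\eta$; for instance, it is trivial below $\eta$. At a limit level $\gamma$ we have the countably many trees $T_\eta\restriction\gamma$ for $\eta<\gamma$. We decode $a_\gamma$ as a pair consisting of a finite set $e\subseteq\gamma$ and a maximal antichain $A$ of the product $\prod_{i\in e}T_{e_i}\restriction\gamma$ (if it is one). We then extend each tree to level $\gamma$ by choosing, for each node, countably many cofinal branches, always picking $<_L$-least branches through a canonical enumeration. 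For each finite $e\subseteq\gamma$ we choose the branches so that every product node on level $\gamma$ lies above an element of the guessed antichain. This is the standard generic-branch argument: enumerate the countably many pairs (finite $e$, node in the product), and build branches meeting the dense sets given by the antichains. Successor levels split every node into $\omega$ successors. All choices are made canonically, via the $<_L$-least construction, so the whole recursion $\gamma\mapsto (T_\eta^\gamma)_{\eta\le\gamma}$ is a $\Sigma_1$ recursion over $L_{\omega_1}$. Let $\phi(t,\gamma,\eta)$ express: there exists a countable $L_\delta$ containing the (unique) initial segment of the recursion up to $\gamma+1$ in which $t\in T^\gamma_\eta$. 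This formula is $\Sigma_1$ with parameter $\omega_1$ (the parameter is only needed to bound quantifiers in the statement that $\gamma,\eta<\omega_1$).

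\textbf{Step 3: verification.} The independence argument is standard. Given finite $e$ and a maximal antichain $A$ of $\prod_{i\in e}T_{e_i}$ in $L$, $\diamondsuit$ and a club argument give a $\gamma$ with $A\cap(\text{product}\restriction\gamma)=a_\gamma$ maximal below $\gamma$. Step 2 then seals $A$ at level $\gamma$, so $A$ is countable. Hence each finite product is Suslin in $L$, and the trees are $L$-Suslin.

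\textbf{Uniform definability over $M$.} Every initial segment of the recursion lies in $L_{\omega_1}\subseteq M$. Moreover, $\Sigma_1$ statements about elements of $L_{\omega_1}$ are absolute between $L_{\omega_1}$, $M$ and $V$, since witnesses are countable $L_\delta$'s, and "$x=L_\delta$" is $\Delta_1$ in $\mathsf{ZFC}^-$.

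The main obstacle is Step 2's simultaneous sealing for all finite subproducts while keeping every choice canonical. The nuance is that sealing an antichain of a product requires every tuple of level-$\gamma$ nodes across different trees to be generic over the product. We ensure this by choosing, at level $\gamma$, a single sequence of branches for all trees jointly, generic for the countably many dense sets coming from all finite $e$. This is possible because there are only countably many requirements at each stage.
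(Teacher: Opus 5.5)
Your proposal is correct and matches the standard argument the paper relies on; the paper itself only cites the proof. You build the trees inside $L_{\omega_1}$ along the canonical $\diamondsuit$-sequence with $<_L$-least choices, seal guessed maximal antichains of finite subproducts at limit levels by jointly generic branches, and certify every initial segment of the recursion by a countable $L_\delta$, so the definition is $\Sigma_1$ and transfers to any transitive $M$ with $L_{\omega_1}\subseteq M$ and $\omega_1^M=\omega_1$.

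One point deserves to be made explicit. The default clause ``$a_\alpha=\emptyset$ if no such pair exists'' never fires at countable limits, since you can always diagonalize along an $\omega$-sequence cofinal in $\alpha$; this is what makes each value verifiable inside a countable $L_\delta$. Alternatively, the permitted parameter $\omega_1$ lets $M$ compute $L_{\omega_1}$ and evaluate the recursion there directly.
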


We split $\vec{T}$ into two independent sequences, which will be utilized to separate our projective properties:
\[ \vec{S}^0 := \{ S_\alpha \mid \alpha \text{ is even}\} \quad \text{and} \quad \vec{S}^1 := \{ S_\alpha \mid \alpha \text{ is odd}\}. \]

To encode these branches into reals, we employ the almost disjoint coding forcing $\mathbb{A}_D(X)$. We fix a canonical $L$-definable almost disjoint family $D = \{d_\alpha \mid \alpha < \aleph_1\} \subset [\omega]^\omega$. For a set $X \subset \omega_1$, $\mathbb{A}_D(X)$ is the ccc forcing that adds a real $x$ such that $\alpha \in X$ if and only if $x \cap d_\alpha$ is finite. As $\mathbb{A}_D(X)$ has the Knaster property, it preserves the Suslinity of any Suslin tree.

\subsection{The Ground Model and the Coding Forcing}
We first force over $L$ to destroy all members of $\vec{S} = \vec{S}^0 \cup \vec{S}^1$ via generically adding an $\omega_1$-branch. We form the finite support product:
\[ \mathbb{P}^0 := \prod_{\alpha < \omega_1} S_\alpha. \]
Note that this is an $\aleph_1$-sized, ccc forcing over $L$. In a second step, we add $\omega_1$-many Cohen subsets of $\omega_1$ using a countably supported product evaluated in $L$: 
\[ \mathbb{P}^1 := \Big(\prod_{\alpha < \omega_1} \mathbb{C}(\omega_1)\Big)^L. \]

Note that $\mathbb{P}^1$ is $\sigma$-closed only over $L$, so we need to ensure that the two-step iteration $\mathbb{P}^0 \ast \mathbb{P}^1$ preserves $\aleph_1$. Forcing with the two-step iteration is equivalent to forcing with the product $\mathbb{P}^0 \times \mathbb{P}^1$. Because $\mathbb{P}^0 \times \mathbb{P}^1$ is isomorphic to $\mathbb{P}^1 \times \mathbb{P}^0$, we can view the extension as first forcing with $\mathbb{P}^1$. Since $\mathbb{P}^1$ is $\sigma$-closed over $L$, it does not add new $\omega_1$-branches to trees from $L$. Thus, $\vec{S}$ remains an independent sequence of Suslin trees in $L^{\mathbb{P}^1}$, and $\mathbb{P}^0$ remains a ccc forcing over $L^{\mathbb{P}^1}$. So $\aleph_1$ is preserved and $\mathsf{CH}$ remains true. 

Our ground model is defined as:
\[ W := L[G^0][G^1], \]
where $G^0 \times G^1$ is a generic filter for $\mathbb{P}^0 \times \mathbb{P}^1$.

Let $t \in W$ be a real (or a recursively coded tuple of reals and integers), and let $l \in \{0,1\}$ and $\eta < \omega_1$. We define a coding forcing $\code(t, l, \eta)$ over $W$ which codes the tuple $t$ into the branches of $\vec{S}^l$. We illustrate this for $l=1$.

We denote the $\eta$-th coordinate of the generic filter $G^1$ by $g_\eta \subset \omega_1$. Fixing a canonically constructible bijection $\rho: ([\omega_1]^\omega)^L \to \omega_1$, we define a subset $h \subset \omega_1$:
\[ h := \{\rho(g_\eta \cap \alpha) \mid \alpha < \omega_1\}. \]
This $h$ indexes the $\omega$-blocks of $\vec{S}^1$ where $t$ will be coded. We define the block assignment $A \subset \omega_1$ as:
\begin{align*}
A := &\{ \omega\gamma + 2n \mid \gamma \in h, n \notin t \} \cup \\
     &\{ \omega\gamma + 2n + 1 \mid \gamma \in h, n \in t \}.
\end{align*}

Let $X \subset \omega_1$ be chosen such that it canonically codes the set $A$ and the corresponding set of generic branches $\{b_\beta \subset S^1_\beta \mid \beta \in A\}$ added by $\mathbb{P}^0$. Working in $L[X]$, one can decode $t$ by inspecting the $\omega$-block of trees starting at any $\gamma \in h$:
\begin{itemize}
    \item[$(\ast)_1(\gamma, t)$:] $n \in t$ iff $S^1_{\omega\gamma + 2n + 1}$ has an $\omega_1$-branch, and $n \notin t$ iff $S^1_{\omega\gamma + 2n}$ has an $\omega_1$-branch.
\end{itemize}

Applying an argument resembling David's trick \cite{david1982very}, we rewrite the information of $X \subset \omega_1$ as a subset $Y \subset \omega_1$. Any transitive, $\aleph_1$-sized model $N \models \mathsf{ZFC}^-$ containing $X$ will be able to define $\vec{S}^1$ correctly and decode $t$ via $(\ast)_1$. If we code the model $(N, \in)$ as a set $X_N \subset \omega_1$, there will be an $\aleph_1$-sized ordinal $\beta$ such that $L_\beta[X_N]$ correctly believes the decoded model satisfies $(\ast)_1(\gamma, t)$ for every $\gamma \in h$. We can fix a club $C \subset \omega_1$ and a sequence of countable elementary submodels $(M_\alpha \mid \alpha \in C)$ of $L_\beta[X_N]$. 

We let the set $Y \subset \omega_1$ code the pair $(C, X_N)$ such that the odd entries of $Y$ code $X_N$ and the even entries $E(Y)$ code the enumeration $\{c_\alpha \mid \alpha < \omega_1\}$ of $C$ continuously (e.g., $E(Y) \cap [c_\alpha, c_\alpha+\omega)$ codes a well-ordering of type $c_{\alpha+1}$). This structural encoding creates a local version of our decoding property:
\begin{align*}
\sigma_1(t) := {} & \parbox[t]{0.75\textwidth}{For any countable transitive model $M \models \mathsf{ZFC}^- + \text{``}\aleph_1 \text{ exists''}$ such that $\omega_1^M=(\omega_1^L)^M$ and $Y \cap \omega_1^M \in M$, $M$ can construct $L[Y \cap \omega_1^M]$, which in turn sees that there is an $\aleph_1^M$-sized transitive model $\bar{N} \models \mathsf{ZFC}^-$ such that $\bar{N} \models (\ast)_1(\gamma, t)$ for $\aleph_1^M$-many $\gamma$.}
\end{align*}

We define our coding forcing as the almost disjoint coding of this specifically crafted $Y$:
\[ \code(t, 1, \eta) := \mathbb{A}_D(Y). \]
The fundamental effect of this forcing is the addition of a generic real $r$ that acts as a witness to the following $\Pi^1_2$ formula in the parameters $r$ and $t$:
\begin{align*}
\Psi_1(r, t) := {} & \parbox[t]{0.75\textwidth}{For any countable, transitive model $M \models \mathsf{ZFC}^- + \text{``}\aleph_1 \text{ exists''}$ such that $\omega_1^M = (\omega_1^L)^M$ and $r \in M$, $M$ can construct $L[r]^M$, which in turn satisfies that there is a transitive $\mathsf{ZFC}^-$-model $\bar{N}$ of size $\aleph_1^M$ such that $\bar{N} \models (\ast)_1(\gamma, t)$ for an $\aleph_1^M$-sized set of ordinals $\gamma$.}
\end{align*}

When $\Psi_1(r, t)$ holds, we say that $t$ is coded into $\vec{S}^1$. The forcing $\code(t,0,\eta)$ and the corresponding formula $\Psi_0(r, t)$ are defined identically using $\vec{S}^0$. 

\subsection{Allowable Forcings}
To iterate these coding forcings without inadvertent interference, we restrict ourselves to iterations which code reals in an ``injective'' way.

\begin{definition} \label{Def:Allowable}
Let $\alpha < \omega_1$ and let $F \in L$, $F: \alpha \to L$ be a bookkeeping function. A finite support iteration $\mathbb{P} = (\mathbb{P}_\beta \mid \beta < \alpha)$ is called \emph{$0$-allowable (relative to $F$)} if $F$ determines $\mathbb{P}$ inductively as follows:
\begin{itemize}
    \item[] Assume $\beta \ge 0$ and $\mathbb{P}_\beta$ is defined. Let $G_\beta$ be $\mathbb{P}_\beta$-generic over $W$. Assume $F(\beta) = (\dot{t}, \dot{l}, \dot{\eta})$ evaluates to a tuple of parameters $t$, an index $l \in \{0,1\}$, and an ordinal $\eta < \omega_1$.
    \item The freshness condition: If there is a $\gamma < \beta$ where the iteration evaluated $F(\gamma) = (\dot{t}', \dot{l}', \dot{\eta}')$ such that $\dot{\eta}^{\prime G_\gamma} = \eta$ (i.e., the coding area $\eta$ has already been utilized), then $\mathbb{P}(\beta)^{G_\beta}$ is the trivial forcing.
    \item {Coding:} If The freshness condition is met (no such $\gamma < \beta$ exists), then $\mathbb{P}(\beta)^{G_\beta} := \code(t, l, \eta)$.
\end{itemize}
\end{definition}

\begin{definition}
Let $\forceP= ((\forceP_{\beta}, \dot{\forceQ}_{\beta}) \mid \beta < \delta)$ be an allowable forcing. Let $G \subset \forceP$ be a generic filter over $W$. Then
\begin{align*}
C^G:= \{ \eta < \omega_1\mid \exists \beta < \delta \exists \dot{t},&\dot{l}, \dot{\eta} \in W^{\forceP_{\beta}}  \\&( \dot{{\forceQ}}_{\beta})^{G_{\beta}} = \operatorname{Code} (t,l,\eta )\}
\end{align*}
is the set of coding areas of $\forceP$ relative to $G$. 
We also let 
\[ C^{\forceP} := \{\eta < \omega_1 \mid \exists p \in \forceP ( p \Vdash \eta  \in C^{\dot{G}} \}. \]
\end{definition}
It is immediate from the definition that $C^G$ and also $C^{\forceP}$ are always countable sets for every allowable $\forceP$. Next we derive some properties of allowable forcings.

\begin{definition} \label{Def:AllowableOverExtension}
Let $\mathbb{P}$ be an allowable forcing over $W$ of length $\delta_1$, and let $G_{\mathbb{P}}$ be $\mathbb{P}$-generic over $W$. A finite support iteration $\mathbb{Q} = (\mathbb{Q}_\beta \mid \beta < \delta_2)$ in $W[G_{\mathbb{P}}]$ is called \emph{allowable over the extension $W[G_{\mathbb{P}}]$} if there exists a bookkeeping function $F_{\mathbb{Q}}: \delta_2 \to W[G_{\mathbb{P}}]$ such that $\mathbb{Q}(\beta)^{G_{\mathbb{Q}_\beta}}$ is determined exactly as in Definition \ref{Def:Allowable}, but subject to the global freshness condition: there is no prior stage in $\mathbb{Q}$ ($\alpha < \beta$) \emph{and} no stage in the base forcing $\mathbb{P}$ ($\xi < \delta_1$) that utilized the proposed coding area $\eta$.
\end{definition}

\begin{lemma} \label{Lemma:PropertiesOfAllowable}
Allowable forcings exhibit the following properties:
\begin{enumerate}
    \item Let $\mathbb{P}$ be an allowable forcing over $W$ of length $\delta_1$, and suppose $\mathbb{P} \Vdash \text{``}\dot{\mathbb{Q}} \text{ is allowable over } W[G_{\mathbb{P}}] \text{ of length } \delta_2\text{''}$. Then the two-step iteration $\mathbb{P} \ast \dot{\mathbb{Q}}$ is an allowable forcing over $W$ of length $\delta_1 + \delta_2$.
    \item Every allowable forcing over $W$ is ccc, preserves cardinals, and preserves $\mathsf{CH}$.
    \item The product of two allowable forcings $\mathbb{P}$ and $\mathbb{Q}$ can be densely embedded into an allowable forcing provided that their utilized coding areas are disjoint, $C^{\mathbb{P}} \cap C^{\mathbb{Q}} = \emptyset$.
\end{enumerate}
\end{lemma}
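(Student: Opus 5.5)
The three items are of quite different character, and I would prove them in the order (1), (2), (3): (2) uses (1) implicitly, since ccc-ness of a long iteration is checked at successor and limit stages, and (3) uses (2).

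\textbf{Item (1).} This is essentially bookkeeping. Given the name $\dot{\mathbb{Q}}$ with its bookkeeping function $\dot F_{\mathbb{Q}}$, I define $F:\delta_1+\delta_2\to L$ by $F\restriction\delta_1 = F_{\mathbb{P}}$ and $F(\delta_1+\beta)$ equal to a $\mathbb{P}$-name for the triple $\dot F_{\mathbb{Q}}(\beta)$, read as a $\mathbb{P}\ast\dot{\mathbb{Q}}_\beta$-name. There is a technical point: $F$ is required to lie in $L$, so one replaces names by $L$-codes of nice names. This is possible because all forcings involved are $\aleph_1$-sized over $W$ and $W$ is a generic extension of $L$. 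The core check is that the freshness condition in Definition \ref{Def:Allowable} for the concatenated iteration is literally the global freshness condition of Definition \ref{Def:AllowableOverExtension}. Hence at every stage both iterations evaluate to the same $\code(t,l,\eta)$ or to the trivial forcing. Finite support iterations compose, so $\mathbb{P}\ast\dot{\mathbb{Q}}$ is the finite support iteration determined by $F$.

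\textbf{Item (2).} Each iterand is either trivial or of the form $\code(t,l,\eta)=\mathbb{A}_D(Y)$. Almost disjoint coding is $\sigma$-centered, hence Knaster and in particular ccc in the intermediate model. A finite support iteration of ccc forcings is ccc, so cardinals are preserved. For $\mathsf{CH}$: $W\models\mathsf{CH}$, every allowable forcing has countable length $\alpha<\omega_1$, and each iterand has size $\aleph_1$. Inductively, $\mathbb{P}_\beta$ has a dense subset of size $\aleph_1$, and so carries at most $\aleph_1^{\aleph_0}=\aleph_1$ nice names for reals. Therefore $2^{\aleph_0}=\aleph_1$ in the extension.

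\textbf{Item (3).} This is the main obstacle. Write $\mathbb{P}$ with bookkeeping $F_{\mathbb{P}}$ of length $\delta_1$, and $\mathbb{Q}$ with $F_{\mathbb{Q}}$ of length $\delta_2$. I consider the concatenated allowable iteration $\mathbb{R}$ of length $\delta_1+\delta_2$. Its bookkeeping is $F_{\mathbb{P}}$ followed by $F_{\mathbb{Q}}$, with all $\mathbb{Q}$-names reinterpreted as names over the model obtained after the first $\delta_1$ stages. The disjointness $C^{\mathbb{P}}\cap C^{\mathbb{Q}}=\emptyset$ guarantees that the global freshness condition never trivializes a $\mathbb{Q}$-stage that was nontrivial in $\mathbb{Q}$ alone. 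The delicate point is that the $\mathbb{Q}$-iterands $\code(t,l,\eta)=\mathbb{A}_D(Y)$ must be the same objects whether evaluated over $W[G_{\mathbb{Q}_\beta}]$ or over $W[G_{\mathbb{P}}][G_{\mathbb{Q}_\beta}]$. Here $Y$ is built in $W$ from $t$, from $g_\eta$, and from the ground-model branches; the only dependence on the extension is through $t$. So the forcing relation of $\mathbb{A}_D(Y)$ is computed the same way in both models, as it is defined by finite conditions $(s,F)$ with $F\subset Y$ finite. Consequently the map $(p,q)\mapsto p^\frown q$ is an embedding of $\mathbb{P}\times\mathbb{Q}$ into $\mathbb{R}$. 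To show density, I would argue by induction on the $\mathbb{Q}$-stages. Given a condition of $\mathbb{R}$, I extend its $\mathbb{P}$-part to decide the finitely many relevant values of the $\mathbb{Q}$-coordinates, i.e.\ the stems $s$ and finite sets $F$ appearing in the names. This uses the fact that the $\mathbb{Q}$-part is a name depending only on the $\mathbb{Q}$-generic, and it yields an element of $\mathbb{P}\times\mathbb{Q}$ below the given condition. If full density fails, I would settle for a complete embedding, which is sufficient for the applications.
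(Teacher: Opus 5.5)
The paper states this lemma without proof (it belongs to the coding machinery imported from the cited works), so there is no argument to compare routes with. Judged on its own, your treatment of (1) and (2) is the standard one and is fine.

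In (3) the concatenation $\mathbb{R}=\mathbb{P}\ast\dot{\mathbb{Q}}'$ is the right object, and your two observations are the right ones:
\begin{itemize}
\item disjointness of $C^{\mathbb P}$ and $C^{\mathbb Q}$ makes the global freshness condition trivialize exactly the same $\mathbb Q$-stages as in $\mathbb Q$ alone;
\item each nontrivial iterand $\mathbb A_D(Y)$ is literally the same poset over $W[H_\beta]$ and over $W[G_{\mathbb P}][H_\beta]$.
\end{itemize}
For the second point, say explicitly that the auxiliary choices in the definition of $Y$ (the model $N$, its code $X_N$, the club $C$) are made canonically, e.g.\ via the canonical wellorder of $L[X]$. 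Otherwise $Y$ is not a function of $t$, $g_\eta$ and the branches.

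The step that fails as written is the density argument. A condition $r\in\mathbb R$ has $\mathbb Q$-coordinates $r(\delta_1+\xi)$ that are names for the combined generic of $\mathbb P\ast\dot{\mathbb Q}'_\xi$. Their values can therefore depend jointly on the $\mathbb P$-generic and on the $\mathbb Q$-generic below $\xi$, e.g.\ a stem computed from a real added by $\mathbb P$ together with a real added by $\mathbb Q$ below $\xi$. Strengthening only the $\mathbb P$-part of $r$ cannot decide such values in general; you must also strengthen the $\mathbb Q$-coordinates below $\xi$. The fact you invoke, that the $\mathbb Q$-part is a name depending only on the $\mathbb Q$-generic, is false for an arbitrary $r$. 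It is exactly what has to be established.

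The repair is the usual determined-conditions argument, in two steps.
\begin{enumerate}
\item Show that conditions of $\mathbb R$ all of whose $\mathbb Q$-coordinates are check names $\check{(s_\xi,F_\xi)}$ are dense. Argue by induction on the largest $\xi$ in the $\mathbb Q$-support: strengthen the whole initial segment $r\upharpoonright(\delta_1+\xi)$, both its $\mathbb P$- and its $\mathbb Q$-part, to decide $r(\delta_1+\xi)$, then recurse below $\xi$.
\item Note that ``$F_\xi\subseteq Y_\xi$'' and the (non)triviality of stage $\xi$ depend only on the $\mathbb Q_\xi$-generic. By the inductive hypothesis and the product lemma, if $p^\frown(q\upharpoonright\xi)$ forces them, then $q\upharpoonright\xi$ already forces them over $W$. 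Hence every determined condition is literally of the form $p^\frown q$ with $(p,q)\in\mathbb P\times\mathbb Q$.
\end{enumerate}
This gives full density, so your fallback to a complete embedding is unnecessary. It would also not prove the statement: a mere complete embedding only makes $W[G_{\mathbb P}\times G_{\mathbb Q}]$ an intermediate model of an allowable extension, not an allowable extension itself.
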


Because the definition of an allowable forcing $\mathbb{P}$ depends only on the names of the reals listed by the bookkeeping function and the specific branches through $\vec{S}$ utilized by the coding areas, it can be defined inside a proper inner model.

\begin{lemma}\label{Lemma:DefinabilityInInnerModels}
    Let $\forceP \in W$ be an allowable forcing and let $F: \delta \rightarrow L$ be its bookkeeping. Then there is an uncountable, co-uncountable subset $I \subset \omega_1$ and a countable subset $J \subset \omega_1$ such that
    $\forceP$ can successfully be defined already in an inner model of $W$ of the form
    $L[(G^0 \upharpoonright I)  \times (G^1 \upharpoonright J)]$, where $G^0 \upharpoonright I$ is just the restriction of the generic $G^0$ to coordinates which are in I and $G^1 \upharpoonright J$ being defined similarly.
\end{lemma}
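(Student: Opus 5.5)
The plan is to collect, inside $W$, the countably many pieces of information that $\forceP$ actually uses, and to read off $I$ and $J$ from them. Since $\forceP$ is an allowable forcing of countable length $\delta<\omega_1$ and ccc (Lemma \ref{Lemma:PropertiesOfAllowable}), every name appearing in the bookkeeping $F$ and every iterand $\dot{\forceQ}_\beta$ can be taken to be a nice name. Such a name involves only countably many antichains, each of them countable. Likewise, $C^{\forceP}$ is countable. So I would first fix the countable set $J := C^{\forceP}$. The forcing $\code(t,l,\eta)$ depends on $G^1$ only through the coordinate $g_\eta$, which is used to define $h$, so $J$ records all coordinates of $G^1$ that the coding steps ever consult.

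Next I would determine which trees are used. For a fixed $\eta\in J$, the set $h_\eta=\{\rho(g_\eta\cap\alpha)\mid\alpha<\omega_1\}$ is an uncountable subset of $\omega_1$. The block assignment $A$ uses the blocks $[\omega\gamma,\omega\gamma+\omega)$ of $\vec S^l$ for $\gamma\in h_\eta$. Hence the set $X$, and therefore $Y$ and $\mathbb{A}_D(Y)$, depends only on $g_\eta$, on the real $t$, and on the branches $b_\beta$ for $\beta\in A$. I would let $I$ be the set of all indices (in the global enumeration of $\vec S$) of trees lying in blocks indexed by $\bigcup_{\eta\in J}h_\eta$, with both parities $l=0,1$ included. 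In addition, $I$ must contain the countably many coordinates of $G^0$ needed to evaluate the reals $t$ named by $F$. Those reals live in intermediate extensions, but their names, and the names of the earlier coding reals, are hereditarily countable objects over $W$, and by ccc each of them lies in $L[G^0\upharpoonright I_0][G^1\upharpoonright J_0]$ for countable $I_0,J_0$. I would then enlarge $I$ and $J$ by these sets in an $\omega$-length closure argument: evaluate the names, add the supports, re-evaluate, and take the union. $J$ stays countable throughout.

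The main obstacle is to ensure that $I$ is co-uncountable, and that the coding information is computed correctly inside the inner model. For co-uncountability I would use the fact that $\rho$ is a bijection from $([\omega_1]^\omega)^L$ and that the $g_\eta$ are mutually Cohen generic. So each $h_\eta$ is co-uncountable, and countably many of them cannot cover $\omega_1$ modulo a countable set. If necessary I would argue this by genericity: the complement of $h_\eta$ is unbounded, since a condition in $\mathbb{C}(\omega_1)$ can always be extended to avoid any countable initial-segment code. Applying this to the countable union, and noting that the countably many extra coordinates do not matter, shows that $I$ is co-uncountable.

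For correctness I would argue as follows. In $L[(G^0\upharpoonright I)\times(G^1\upharpoonright J)]$ the sequences $\vec S$ and $D$ and the bijection $\rho$ are the same objects, being $L$-definable (Theorem \ref{DefinitionIndependentSequence}). The $g_\eta$ and the relevant branches $b_\beta$ are present, so $A$, $X$ and the model $N$ can be formed there. The choice of $C$ and of $(M_\alpha)$ can be made in the inner model as well, since $\code$ only requires some such $Y$; I would fix the choice canonically there so that the two computations agree. Finally, the freshness conditions and the evaluations of $F(\beta)$ are decided by the relevant generics over these names, which the inner model sees. Induction on $\beta<\delta$ then shows that $\forceP_\beta$ computed in the inner model equals $\forceP_\beta$ computed in $W$.
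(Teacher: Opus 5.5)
Your argument is correct in outline but organized differently from the paper.

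\textbf{The paper's route.} It inducts on the length $\delta$. At a successor step $\forceP_\delta\ast\code(\dot w)$ it regards $\dot w$ as a name for a product: the part already living in $L[G^0\upharpoonright I][G^1\upharpoonright J]$, times $\forceP^0\upharpoonright(\omega_1\setminus I)$ and $\forceP^1\upharpoonright(\omega_1\setminus J)$. It then
\begin{enumerate}
\item uses Easton's lemma over $L[G^1\upharpoonright J]$ to discard the $\sigma$-closed Cohen factor;
\item uses the ccc of the remaining tree product to pick up only countably many new tree coordinates $\tilde I$;
\item adds the countably many coding areas $\tilde J$.
\end{enumerate}
Limit stages are handled by taking unions.

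\textbf{Your route.} You fix $J=C^{\forceP}$ at the outset. You take $I$ to be the full blocks, both parities, over $\bigcup_{\eta\in J}h_\eta$, plus countably many coordinates supporting the names. You then check by induction on $\beta$ that the inner model computes the same iteration.

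\textbf{What each buys.} Your version is more explicit. It shows that $I$ must contain the uncountably many tree coordinates determined by the coding areas, and all of each block, since $A$ depends on the not-yet-evaluated $t$ and $l$. The paper leaves this implicit: its base case even yields a countable $I$, and the uncountable part only surfaces later, in the proof of Lemma \ref{Lemma:NoUnwantedCodes}. You also address co-uncountability and the need for a canonical, absolute choice of $Y$, neither of which the paper discusses. What the paper's route buys is a valid justification of the one step you only gesture at.

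\textbf{The step needing repair.} You say that ``by ccc'' each name lies in $L[G^0\upharpoonright I_0][G^1\upharpoonright J_0]$ with $I_0,J_0$ countable. Ccc does not give this, because $\forceP^0\times\forceP^1$ is not ccc over $L$. A chain-condition argument alone bounds the $G^1$-support only by $\aleph_1$. The missing ingredient is the $\sigma$-closure of $\forceP^1$ in $L$:
\begin{enumerate}
\item By Easton's lemma, $\forceP^1$ adds no countable sets of ordinals over $L[G^0]$.
\item Hence every hereditarily countable object of $W$, in particular every nice $\forceP_\beta$-name for a real, already lies in $L[G^0]$.
\item The ccc of $\forceP^0$ then gives a countable $I_0$, and you can take $J_0=\emptyset$.
\end{enumerate}

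\textbf{Two smaller points.} First, the iterands $\dot\forceQ_\beta$ are not countable nice names: they name a poset of size $\aleph_1$ built from the uncountable $Y$. Your argument does not actually rely on this claim, so it is harmless. Second, co-uncountability is cleaner by counting than by the genericity remark you sketch. Only countably many constructible subsets of $\omega$ have the form $g_\eta\cap\alpha$ with $\eta\in J$. So $\rho$ sends uncountably many elements of $([\omega]^\omega)^L$ outside $\bigcup_{\eta\in J}h_\eta$, and uncountably many blocks avoid $I$.
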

\begin{proof}
    The proof is via induction on the length $\delta$ of the iteration.
    Assume first that $\delta=1$ then we can assume that $\forceP = \operatorname{Code} (x)$ for some real $x \in W$. Recall that $W$ is defined as the generic extension of $L$ via $\forceP^0$ which generically adds branches to each tree in $\vec{S}$ and $\forceP^1= (\prod_{i \omega_1} \mathbb{C} (\omega_1))^L$. The reals in $W= L[G^0 \times G^1]$ are all elements of $L[G^0]$ already as is immediate from Easton's Lemma (see Lemma 15.19 from \cite{Jech}). So in particular there is a countable $I \subset \omega_1$ such that $w \in L[G^0 \upharpoonright I]$ which shows the lemma for one step iterations.

    Now we assume that the lemma is true for allowable forcings of length $\delta$ and our goal is to show that it also must be true for allowable forcings of length $\delta+1$.
    We fix an allowable forcing $\forceP$ of length $\delta+1$ and write $\forceP= \forceP_{\delta} \ast \operatorname{Code} (\dot{w})$ for some $(\forceP^0 \times \forceP^1)\ast \forceP_{\delta}$-name of a real $\dot{w}$. By our induction hypothesis $\forceP_{\delta}$ is already definable in some inner model $L[G^0 \upharpoonright I] [G^1 \upharpoonright J]$. So the name $\dot{w}$ can be written as a $[(\forceP^0 \upharpoonright I) \times (\forceP^1 \upharpoonright J) \ast \forceP_{\delta}] \times [(\forceP^0 \upharpoonright (\omega_1 \setminus I)) \times \forceP^1 \upharpoonright (\omega_1 \setminus J)]$-name. Again by Easton's Lemma, this time applied over the ground model $L[G^1 \upharpoonright J]$, we obtain that the name $\dot{w}$ is in fact (equivalent to) a   $[(\forceP^0 \upharpoonright I) \times (\forceP^1 \upharpoonright J) \ast \forceP_{\delta}] \times (\forceP^0 \upharpoonright (\omega_1 \setminus I))$-name. Note that the forcing $\forceP^0 \upharpoonright (\omega_1 \setminus I)$ is an $\omega_1$-length iteration with finite support which adds a branch through every Suslin tree indexed in the set $\omega_1 \setminus I$ so by standard facts of forcing theory the name $\dot{w}$ is in fact (equivalent to) a $(\forceP^0 \upharpoonright I) \times (\forceP^1 \upharpoonright J) \ast \forceP_{\delta} \times (\forceP^0 \upharpoonright \tilde{I}))$-name, for a countable set $\tilde{I}$. We add $\tilde{I}$ to $I$ and finally note that the coding forcing $\operatorname{Code}(\dot{w})$, as it is ccc will potentially only use a countable set of coding areas $\tilde{J}$. So $\forceP= \forceP_{\delta} \ast \operatorname{Code} (\dot{w})$ can successfully be defined in the inner model
    $L[G^0 \upharpoonright I \cup \tilde{I}][G^1 \upharpoonright J \cup \tilde{J}]$ which shows the successor case.

    The limit case follows immediately from the above via taking countable unions. 
    \end{proof}

\subsection{No Unwanted Codes}

A critical feature of allowable forcings is that they do not accidentally encode unwanted information into the sequences $\vec{S}^0$ or $\vec{S}^1$. When the iteration runs linearly, we must guarantee that the only tuples validating the coding formula $\Psi_l$ are exactly those explicitly targeted by the bookkeeping function.

\begin{lemma}[No Unwanted Codes] \label{Lemma:NoUnwantedCodes}
Let $\mathbb{P} = (\mathbb{P}_\beta \mid \beta < \delta)$ be an allowable forcing over $W$, and let $G \subset \mathbb{P}$ be a generic filter over $W$. For $l \in \{0,1\}$, let $C^l$ be the set of tuples coded by a non-trivial coding factor into $\vec{S}^l$ along the iteration. Then in $W[G]$, if $t$ is a tuple and there exists a real $r$ witnessing $\Psi_l(r, t)$ for $\vec{S}^l$, then $t \in C^l$.
\end{lemma}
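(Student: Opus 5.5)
Suppose in $W[G]$ there is a real $r$ with $\Psi_l(r,t)$. The plan is to show that $t$ must have been explicitly coded, by exploiting which trees in $\vec{S}^l$ can have $\omega_1$-branches in a suitable inner model. We argue by contradiction, assuming $t \notin C^l$.

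\textbf{Step 1 (reflection to an $\aleph_1$-sized model).} Since $\Psi_l(r,t)$ is $\Pi^1_2$, it holds in every countable transitive $M$ containing $r$. By the usual Löwenheim–Skolem and condensation argument it therefore also holds in every transitive $\aleph_1$-sized model of $\mathsf{ZFC}^-$ containing $r$ and $\omega_1$. In particular $L[r]$, read at the level of $\omega_1$, sees a transitive $\mathsf{ZFC}^-$-model $\bar N$ and an uncountable set $\Gamma$ of $\gamma$ with $\bar N \models (\ast)_l(\gamma,t)$. By Theorem \ref{DefinitionIndependentSequence}, $\bar N$ computes $\vec{S}^l$ correctly. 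Hence for each $\gamma \in \Gamma$ and each $n$, $W[G]$ contains an $\omega_1$-branch through $S^l_{\omega\gamma+2n+1}$ if $n\in t$, or through $S^l_{\omega\gamma+2n}$ if $n\notin t$. The task is thus to show that these branches, together with the real $r$, cannot exist unless $t\in C^l$.

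\textbf{Step 2 (pass to an inner model where only coded blocks have branches).} Apply Lemma \ref{Lemma:DefinabilityInInnerModels} to $\mathbb{P}$, and also to the countably many names relevant to $r$ and $t$. This shows that $r$, $t$ and $G$ live in $L[(G^0\upharpoonright I)\times(G^1\upharpoonright J)][G]$ for a countable $J$. We then shrink $I$ to consist of the tree indices $A$ actually used by the nontrivial coding factors, plus a countable set. Since $J$ is countable and each coding area $\eta \in J$ determines $h_\eta$, the only uncountable families of $\omega$-blocks carrying branches in this inner model are $\bigcup_{\eta\in C^G} h_\eta$ and countably many extra indices. Because the $g_\eta$ are mutually generic Cohen subsets, the sets $h_\eta$ for different $\eta$ are almost disjoint (they have countable pairwise intersection). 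Therefore the uncountable set $\Gamma$ must, up to a countable set, lie inside a single $h_\eta$ with $\eta \in C^G$.

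\textbf{Step 3 (trees outside the used pattern stay Suslin).} This is the main obstacle. We must show that a tree $S^l_\beta$ with $\beta\notin I$ remains Suslin in the inner model after forcing with $\mathbb{P}$. For this we would use that $\vec{S}$ is independent, so $\prod_{\beta\notin I}S_\beta$ (finite support) is ccc and preserves the Suslinity of remaining trees. The allowable forcing is an iteration of Knaster almost-disjoint codings, so by Lemma \ref{Lemma:PropertiesOfAllowable} it preserves Suslin trees as well. The delicate point is that $\mathbb{P}$ was defined using branches $b_\beta$ with $\beta\in I$ only, so the factorization is legitimate.

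\textbf{Step 4 (conclusion).} Fix $\eta$ with $\Gamma\cap h_\eta$ uncountable, and suppose the coding area $\eta$ coded $t'$. Pick $\gamma\in\Gamma\cap h_\eta$ beyond the countable exceptional set. Then the branches present in block $\gamma$ follow the pattern $(\ast)_l(\gamma,t')$, and by Step 3 the complementary trees $S^l_{\omega\gamma+2n}$ (for $n\in t'$) and $S^l_{\omega\gamma+2n+1}$ (for $n\notin t'$) have no branches. Hence $t=t'\in C^l$, and the same $l$ is forced because $\vec{S}^0$ and $\vec{S}^1$ use disjoint indices.
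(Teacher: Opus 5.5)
Your proof is essentially the paper's. In both, independence of $\vec S$ and the Knaster property of the codings keep every tree indexed outside $\bigcup_\beta A_\beta$ and a countable exceptional set Suslin in an inner model containing $r$, so $L[r]$ can only show the branch pattern of a tuple that was actually coded; the one minor difference is that the paper gets $r$ into that model by pulling a single $S^1_\xi$ out of the whole forcing and using its $\omega$-distributivity, while you bound the support of the name of $r$ via Lemma~\ref{Lemma:DefinabilityInInnerModels}. Some corrections: your Step 1 conclusion about branches in $W[G]$ is vacuous, since $\mathbb{P}^0$ already adds a branch through every $S_\alpha$, so Step 4 must compare the branches lying in $L[r]\subseteq L[(G^0\upharpoonright I)\times(G^1\upharpoonright J)][G]$ (which your Step 2 provides); $\Gamma$ need not lie almost inside a single $h_\eta$, since the same $t$ may be coded at several areas, though you only use one $\gamma\in\Gamma\cap h_\eta$ beyond the exceptional set; and in Step 3 the product should run over $\beta\in I$, with the $\sigma$-closed factor $\mathbb{P}^1\upharpoonright J$ forced first.
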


\begin{proof}
We outline the proof for $l=1$; the case for $l=0$ is identical. Let $G$ be $\mathbb{P}$-generic over $W$, we work in $W[G]$. Let $g= (g_{\beta} \mid \beta < \delta)$ be the sequence of countable coding areas of $\mathbb{P}$ relative to $G$. We let $\rho: ([\omega_1]^{\omega})^L \rightarrow \omega_1$ be our fixed, constructible bijection and let $h_{\beta}= \{\rho(g_{\beta} \cap \gamma) \mid \gamma < \delta\}$. By the global freshness condition, all utilized coding areas $g_{\beta}$ are distinct coordinates of the Cohen generic $G^1$, ensuring the family $\{h_{\beta} \mid \beta < \delta\}$ forms an almost disjoint family of subsets of $\omega_1$. Thus there is an $\eta < \omega_1$ such that for arbitrary distinct $\beta_1, \beta_2 < \delta$, $h_{\beta_1}\cap h_{\beta_2} \subset \eta$.

We assume for a contradiction that there is a tuple $t \notin C^1$ which satisfies $\Psi_1(r, t)$ for some generic real $r \in W[G]$. As a consequence, $r$ defines an unbounded set $h \subset \omega_1$ such that for every $\gamma \in h$ the following holds true:
$$n \in t \Rightarrow L[r] \models \text{``}S^1_{\omega \gamma + 2n+1} \text{ has an } \omega_1\text{-branch''}$$
and
$$n \notin t \Rightarrow L[r] \models \text{``}S^1_{\omega \gamma + 2n} \text{ has an } \omega_1\text{-branch''}$$

As $t$ is distinct from every tuple $t_\beta \in C^1$, and because $h$ is unbounded, there must be $\aleph_1$-many $\alpha \in h$ with $\alpha > \eta$ and an $n \in \omega$ such that, without loss of generality:
$$L[r] \models \text{``}S^1_{\omega \alpha + 2n+1} \text{ has an } \omega_1\text{-branch''}$$
yet for every $r_{\beta}$ witnessing that $\Psi_1(r_{\beta}, t_{\beta})$ holds true:
$$L[r_{\beta}] \models \text{``}S^1_{\omega \alpha + 2n+1} \text{ does not have an } \omega_1\text{-branch''}$$

Invoking the proof of the definability lemma, there is an uncountable, co-uncountable $I \subset \omega_1$ and a countable $J \subset \omega_1$ such that $\mathbb{P} \in L[G^0 \restriction I][G^1 \restriction J]$ and the set $I$ consists of all the indices of trees determined by the coding areas $G^1 \restriction J$ with the addition of only countably many indices from $\vec{S}$. We denote this countable set of indices with $C$.

We collect the sets:
$$A_{\beta} := \{\omega \gamma +2n \mid \gamma \in h_{\beta}, n \notin t_{\beta}\} \cup \{\omega \gamma + 2n+1 \mid \gamma \in h_{\beta}, n \in t_{\beta}\}$$

As $G \subset \mathbb{P}$ picks at each stage $\beta < \delta$ of its iteration exactly one coding area $g_{\beta}$, the generic extension $L[G^0 \restriction I][G^1 \restriction J][G]$ can be re-written as:
$$L[\{G^0_{\xi} \mid \xi \in \bigcup_{\beta < \delta} A_{\beta}\}][\{G^0_{\xi} \mid \xi \in C\}][(g_{\beta} \mid \beta < \delta)][\{G^0_{\xi} \mid \xi \in I \land \xi \notin (C \cup \bigcup_{\beta <\delta} A_{\beta})\}]$$

In particular, each tree from $\vec{S}^1$ with index not in $C \cup \bigcup_{\beta <\delta} A_{\beta}$ is still Suslin in the inner model $L[\{G^0_{\xi} \mid \exists \beta < \delta (\xi \in A_{\beta})\}][\{G^1_{\xi} \mid \xi \in C\}][(g_{\beta} \mid \beta < \delta)]$ by the independence of the sequence.

It is therefore possible to fix an index $\xi = \omega \alpha + 2n+1 \notin C \cup \bigcup_{\beta <\delta} A_{\beta}$ such that there is a real $r$ with:
$$L[r] \models \text{``}S^1_\xi \text{ has an } \omega_1\text{-branch''}$$
whereas for every $\beta < \delta$:
$$L[r_{\beta}] \models \text{``}S^1_\xi \text{ does not have an } \omega_1\text{-branch''}$$

We claim however that there is no real in $W[G]$ such that $W[G] \models L[r] \models \text{``}S^1_\xi \text{ has an } \omega_1\text{-branch''}$, which will be the desired contradiction.

We show this by pulling the forcing $S^1_\xi$ out of the forcing $\mathbb{P}^0 \times \mathbb{P}^1 \ast \mathbb{P}$ over $L$ which produces $W[G]$. Indeed, if we consider $W[G]=L[\mathbb{P}^0][\mathbb{P}^1][G]$, we can rearrange the generics to $W[G]= L[G'^0 \times G^0_\xi][G^1][G] = L[G'][G^0_\xi]$, where $G'^0 = \prod_{\beta \ne \xi} G^0_{\beta}$ and $G' = G'^0 \times G^1 \ast G$.

Note now that $S^1_\xi$ is still a Suslin tree in $L[G']$ so the forcing $S^1_\xi$ is $\omega$-distributive. This can be seen using the fact that $\vec{S}^0$ and $\vec{S}^1$ are independent. Indeed $S^1_\xi$ will remain Suslin in $L[G'^0][G^1]$, as we can write the universe as $L[G^1][G'^0]$. But then the finite support iteration of the almost disjoint coding forcings is Knaster, hence keeps $S^1_\xi$ Suslin. Consequently $2^{\omega} \cap W[G] = 2^{\omega} \cap L[G']$.

But this implies that:
$$L[G'] \models \neg \exists r \big( L[r] \models \text{``}S^1_\xi \text{ has an } \omega_1\text{-branch''} \big)$$
as the existence of an $\omega_1$-branch through $S^1_\xi$ in the inner model $L[r]$ would imply the existence of such a branch in $L[G']$. Further, as no new reals appear when passing to $W[G]$ we also get:
$$W[G] \models \neg \exists r \big( L[r] \models \text{``}S^1_\xi \text{ has an } \omega_1\text{-branch''} \big)$$
This is the desired contradiction.
\end{proof}

\section{The Full \texorpdfstring{$\omega_1$}{omega1}-Iteration and Diagonalization Scheme}

In this section, we weave the two strands of our construction---the $\mathbf{\Sigma}^1_3$-uniformization and the $\Sigma^1_4$-diagonalization---into a single finite support iteration of length $\omega_1$. 

We list all lightface $\Sigma^1_3$ formulas in two free variables with one real parameter as $(\phi_n(v_0,v_1,p) \mid n \in \omega)$. We also list all lightface $\Sigma^1_4$ formulas in two free variables as $(\varphi_n \mid n \in \omega )$, where \[ \varphi_m(v_0, v_1) \equiv \exists a_0 \forall a_1 \psi_m(v_0, v_1, a_0, a_1)\] and $\psi_m$ is a $\Sigma^1_3$ formula.

We designate a specific lightface $\Sigma^1_4$ formula $\sigma(v_0, v_1)$ which will eventually serve as our counterexample to $\Sigma^1_4$-uniformization:
\[ \sigma(v_0, v_1) \equiv \exists a_0 \forall a_1 \big( (v_0, v_1, a_0, a_1) \text{ is not coded into } \vec{S}^1 \big). \]

To ensure $\sigma$ cannot be uniformized by any $\Sigma^1_4$ set $A_m := \{(x,y) \mid \varphi_m(x,y)\}$, we construct an allowable iteration of length $\omega_1$ which diagonalizes against all possible candidates. We utilize the disjoint sequences of independent Suslin trees: $\vec{S}^0$ is strictly reserved for locking in $\mathbf{\Sigma}^1_3$ witnesses at odd stages, and $\vec{S}^1$ is strictly reserved for the $\Sigma^1_4$ diagonalization scheme at even stages.

\subsection{Weakly Allowable and Suitable Forcings}
To ensure the mathematical validity of our diagonalization---specifically the squaring argument utilized later---we must relax the strict disjointness of coding areas which was part of the definition of allowable forcings. For another technical reason, we also want to add plain Cohen forcings as factors in our altered notion of allowable. The result is the following definition:

\begin{definition}[Weakly Allowable Forcing]
A finite support iteration $\forceQ$ is called \emph{weakly allowable} if it is guided by a bookkeeping function similarly to Definition \ref{Def:AllowableOverExtension} and may naturally incorporate standard Cohen forcings $\mathbb{C}$ at every stage of the iteration. We add one relaxation: we  drop the {global  freshness condition}. A weakly allowable forcing is permitted to reuse the same coding area $\eta$ for multiple reals.
\end{definition}

Just as allowable forcings, weakly allowable forcings preserve $\mathsf{CH}$ and cardinals. We will have to drop the ``no unwanted codes'' lemma \ref{Lemma:NoUnwantedCodes}, but we keep a weaker version of this which is sufficient for our needs:
\begin{lemma}
    Suppose that $\forceP= (\forceP_{\beta} \mid \beta < \delta)$ is a weakly allowable forcing. Let $G$ be $\forceP$ generic and assume that \[D^{G}:=\{ \eta < \omega_1 \mid \text{the coding area with index $\eta$ has been used at least twice by $\forceP^G$}  \} \]
    Let $E^G=\{x \in \omega^{\omega} \mid x \text{ is coded at a coding area $\eta \notin D^G$} \}$, then
    there are no unwanted reals in $E^G$.
\end{lemma}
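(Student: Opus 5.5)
The plan is to rerun the proof of Lemma \ref{Lemma:NoUnwantedCodes}, localized to the coding areas that were used only once. First I make the statement precise. ``No unwanted reals in $E^G$'' is read as follows: if $t$ is a tuple, $r \in W[G]$ satisfies $\Psi_l(r,t)$, and the unboundedly many blocks $\gamma \in h$ that $r$ certifies lie in $h_\eta$ for some $\eta \notin D^G$ (equivalently, $t$ is decoded from a coding area not in $D^G$), then $t$ is exactly the unique tuple $t_\eta$ that the bookkeeping coded at $\eta$.

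Fix $G$ and, in $W[G]$, the sequence of used coding areas $(g_\beta)$ with associated sets $h_\beta$. Distinct areas $\eta \neq \eta'$ correspond to distinct Cohen coordinates of $G^1$, so $h_\eta \cap h_{\eta'}$ is countable. Since only countably many areas are used, there is a single bound $\bar\eta<\omega_1$ above which all these intersections are empty. Plain Cohen factors add no coding at all, so they can be absorbed into the ambient forcing.

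Now suppose $t \ne t_\eta$ for some $\eta\notin D^G$, and that a real $r$ witnesses $\Psi_l(r,t)$ with unboundedly many certified blocks inside $h_\eta$. Then there are $\aleph_1$-many $\alpha \in h_\eta\setminus\bar\eta$ and an $n$ where $t$ and $t_\eta$ disagree. Without loss of generality, $L[r]$ sees a branch through the tree $S^l_\xi$ with $\xi=\omega\alpha+2n+1$, while the unique coding of $t_\eta$ at $\eta$ placed its branch at $\omega\alpha+2n$. The index $\xi$ lies in the block of $\alpha$ only, since $\alpha>\bar\eta$ lies in no other $h_{\eta'}$. Because $\eta$ was used only once, no other coding factor ever referred to $\xi$. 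By Lemma \ref{Lemma:DefinabilityInInnerModels}, applied stagewise (its proof uses only ccc and countability of names, so it transfers verbatim to weakly allowable iterations), $\forceP$ mentions only countably many further tree indices $C$. Choosing $\alpha$ large also gives $\xi\notin C$.

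Next I factor out $G^0_\xi$, writing $W[G]=L[G'][G^0_\xi]$ with $G'=G'^0\times G^1\ast G$, and argue as in Lemma \ref{Lemma:NoUnwantedCodes}. By independence, $S^l_\xi$ is Suslin in $L[G^1][G'^0]$. The weakly allowable iteration over that model is a finite support iteration of almost disjoint coding forcings and Cohen forcings, all of which are Knaster, so it is Knaster and keeps $S^l_\xi$ Suslin in $L[G']$. Hence forcing with $S^l_\xi$ adds no reals, so $r\in L[G']$, and then $L[r]$ has no branch through $S^l_\xi$. This contradicts the choice of $\xi$.

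The main obstacle is checking that $\forceP$ really is definable in $L[G']$, that is, that no factor of $\forceP$ depends on the branch $G^0_\xi$. This is the point where reuse of areas could break the argument: a reused area $\eta'\in D^G$ could lead two different reals to place branches into the same blocks, and $\xi$ might then be touched. That is exactly why the statement is restricted to $\eta\notin D^G$. I would handle it by verifying inductively along the iteration that the set $A_\beta$ used by each factor is disjoint from the block of $\alpha$ whenever $g_\beta\ne g_\eta$ (and $\alpha > \bar\eta$), and by noting that the single factor using $\eta$ uses $\omega\alpha+2n$, not $\xi$.
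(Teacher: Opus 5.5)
Your route is the paper's: the paper proves this lemma only by saying it goes ``exactly as'' Lemma~\ref{Lemma:NoUnwantedCodes} modulo $D^G$, and you rerun that argument at an area used once. The gap is in the step you yourself call the main obstacle.

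Everything you propose to verify there is a fact about the iterands as evaluated by $G$, i.e.\ a statement true in $W[G]$. That covers the blocks $A_\beta$ actually used avoiding the block of $\alpha$, the one factor at $\eta$ placing its branch at $\omega\alpha+2n$, and $\alpha>\bar\eta$. The factorization $W[G]=L[G'][G^0_\xi]$, however, needs the poset $\forceP$ itself to lie in $L[G'^0\times G^1]$, and that generally fails. Suppose, for instance, that the tuple coded at $\eta$ is a Cohen real $c$ added at an earlier stage. Then the iterand is the name $\code(\dot c,l,\eta)$, and below conditions forcing $n\in\dot c$ its value is an almost disjoint coding of a set built from the branch through $S^l_\xi$. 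So the ordering of $\forceP$ refers to that branch, and to every branch in the blocks of $h_\eta$, even though $G$ never uses it. The same happens through area names that, under other conditions, take values whose $h$ contains $\alpha$; this is also why $\bar\eta$, being read off from $G$, does not help. Lemma~\ref{Lemma:DefinabilityInInnerModels} does not help either, since its set $I$ contains the whole blocks of the relevant areas, $\xi$ included. So, as written, the product lemma cannot be applied.

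The fix is to pass from the generic to a condition. In $W[G]$ no coding iterand has $\xi$ in its block assignment, so choose $q\in G$ forcing this. Let $\forceP^{\xi}$ be the iteration with the same bookkeeping in which every coding iterand whose block assignment contains $\xi$ is replaced by the trivial forcing. Since also $\xi\notin C$, $\forceP^{\xi}$ is defined without reference to $G^0_\xi$, so it lies in $L[G'^0\times G^1]$. Below $q$ it has the same iterands as $\forceP$, so $G$ yields a $\forceP^{\xi}$-generic $G^{\xi}$ with $W[G]=W[G^{\xi}]$. By the product lemma, $G^0_\xi$ is $S^l_\xi$-generic over $L[G'^0\times G^1][G^{\xi}]$. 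There $S^l_\xi$ is still Suslin because $\forceP^{\xi}$ is Knaster, and from here your last paragraph goes through unchanged. The paper's proof of Lemma~\ref{Lemma:NoUnwantedCodes} performs the same rearrangement of generics without addressing this point, so this repairs the shared argument rather than departing from it. One minor loose end: areas $\eta\notin D^G$ that were never used should also be covered, and the same argument shows nothing is coded there.
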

Its proof is exactly as the proof of lemma \ref{Lemma:NoUnwantedCodes}, modulo the set $D^{G}$, so we skip it. In our iteration which shall prove the main theorem, the set $D^G$ will be a countable set, hence we can code it with a real. This real represents the coding areas we need to discard, when defining $\bf{\Sigma}^1_3$-uniformizing functions.

We need to keep track  of the elements of our $\Sigma^1_4$-set which eventually will witness the failure of $\Sigma^1_4$-uniformiation. For this we introduce the notion of a restricition set.  A restriction set $E$ is a collection of tuples of the form $(x, y, a_0) \in (\omega^\omega)^3$. The role of $E$ is to act as a strict blacklist: it records the putative values and challenges for which we must absolutely prevent any future coding into the $\vec{S}^1$ sequence. 

\begin{definition}[Suitable Forcing] \label{Def:Suitable}
Let $M$ be a generic extension of $W$, and let $E \in M$ be a restriction set. A finite support iteration $\forceQ = (\forceQ_\xi \mid \xi < \delta)$ over $M$ is called \emph{suitable with respect to $E$} if:
\begin{enumerate}
    \item $\forceQ$ is a weakly allowable forcing.
    \item For every stage $\xi < \delta$, every $\forceQ_\xi$-generic filter $H_\xi$ over $M$, every tuple $(x, y, a_0) \in E$, every real $a_1 \in M[H_\xi]$, and every ordinal $\eta < \omega_1$, the specific coding forcing into the $\vec{S}^1$ sequence,
    \[ \code(x, y, a_0, a_1, 1, \eta), \]
    is never utilized as the iterand $\forceQ(\xi)^{H_\xi}$.
\end{enumerate}
\end{definition}

When working within our overarching construction at stage $\beta < \omega_1$, we will have generated a specific restriction set $E_\beta \in W[G_\beta]$. We will succinctly refer to a forcing $\forceQ \in W[G_\beta]$ as being \emph{$\beta$-suitable} if it is suitable with respect to $E_\beta$. 

The restriction set $E_\beta$ is the critical ledger for our diagonalization. By explicitly boycotting the coding of any $a_1$ extending a tuple $(x, y, a_0) \in E_\beta$, a $\beta$-suitable forcing mathematically guarantees that the existential statement $\exists a_0 \forall a_1 \big( (x, y, a_0, a_1) \text{ is not coded into } \vec{S}^1 \big)$ remains true for the pair $(x,y)$ in any subsequent suitable generic extension. 

\begin{lemma}[Product of Suitable Forcings] \label{Lemma:ProductSuitable}
Let $\forceQ_0$ and $\forceQ_1$ be two $\beta$-suitable forcings over $W[G_\beta]$ with associated internal restriction sets $E_{\forceQ_0}$ and $E_{\forceQ_1}$. Suppose further that $\forceQ_0$ never uses a coding factor into $\vec{S}^1$ for any tuple restricted by $E_{\forceQ_1}$, and symmetrically, $\forceQ_1$ never uses a coding factor for any tuple restricted by $E_{\forceQ_0}$. Then their finite support product $\forceQ_0 \times \forceQ_1$ can be densely embedded into a weakly allowable forcing that is suitable with respect to the combined restriction set $E_\beta \cup E_{\forceQ_0} \cup E_{\forceQ_1}$.
\end{lemma}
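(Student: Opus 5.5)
The plan is to mimic item 3 of Lemma \ref{Lemma:PropertiesOfAllowable}, where a product of allowable forcings is densely embedded into a single iteration by interleaving, and then check the two clauses of Definition \ref{Def:Suitable} for the interleaved iteration. Write $\forceQ_0=(\forceQ_{0,\xi}\mid \xi<\delta_0)$ and $\forceQ_1=(\forceQ_{1,\xi}\mid\xi<\delta_1)$. Since both are finite support iterations, $\forceQ_0\times\forceQ_1$ is (densely) equivalent to the two-step iteration $\forceQ_0\ast\check{\forceQ}_1$. Here $\check{\forceQ}_1$ is $\forceQ_1$ re-evaluated in $W[G_\beta][H_0]$: its bookkeeping names are $\forceQ_1$-names, read as $\forceQ_0\times\forceQ_1$-names. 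Concretely, the combined iteration has length $\delta_0+\delta_1$. Its bookkeeping $F$ is $F_{\forceQ_0}$ on $[0,\delta_0)$ and $F(\delta_0+\xi)=F_{\forceQ_1}(\xi)$, with names lifted in the obvious way.

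The first step is to check that at each stage $\delta_0+\xi$ the iterand is the same forcing as in $\forceQ_1$. This works because each iterand is either Cohen forcing or $\code(t,l,\eta)=\mathbb{A}_D(Y)$, where $Y$ is computed from $t$, $\eta$, the generic $G^1$ coordinate $g_\eta$, and the branches from $G^0$. None of these depend on whether $H_0$ has been added. There is no freshness condition to respect, because weak allowability drops it. So reuse of an area $\eta$ already used in $\forceQ_0$ is harmless, and the combined iteration is weakly allowable.

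The second step is to show that the identity map on conditions $(p,q)\mapsto p^\frown q$ is a dense embedding. The idea is to use that the iterands of $\forceQ_1$ are defined in $W[G_\beta][H_{1,\xi}]$, that is, they are ground-model-of-$\forceQ_0$ definable posets. Hence the tail $\check{\forceQ}_1$ evaluated over $W[G_\beta][H_0]$ is literally $\forceQ_1$ with the same conditions. The point to be careful about is whether forcing with $\forceQ_0$ first changes $\mathbb{A}_D(Y)$, for instance by adding new finite conditions; it does not, since conditions are finite objects of the form (finite subset of $\omega$, finite subset of $Y$). Compatibility and maximal antichains are then preserved by the standard product-versus-iteration argument, with $\mathbb{A}_D(Y)$ and $\mathbb{C}$ being absolutely defined.

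The main step, and the expected obstacle, is suitability with respect to $E_\beta\cup E_{\forceQ_0}\cup E_{\forceQ_1}$. I would verify this by case analysis on the stage.
\begin{itemize}
\item At stages $\xi<\delta_0$, $\forceQ_0$ is $\beta$-suitable, so it avoids every tuple of $E_\beta$; by hypothesis it also avoids every tuple of $E_{\forceQ_1}$. Each $E_{\forceQ_0}$ restriction is respected by $\forceQ_0$ by definition of its internal set.
\item At stages $\delta_0+\xi$, the symmetric argument applies to $\forceQ_1$.
\end{itemize}
The subtlety is that clause 2 quantifies over \emph{all} reals $a_1$ in the intermediate extension. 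In the combined iteration these include reals from $H_0$ that $\forceQ_1$ never saw. So I must argue that the coding iterand at stage $\delta_0+\xi$ is $\code(s,1,\eta)$ for the specific tuple $s$ named by $F_{\forceQ_1}$, and that $s$ lives in $W[G_\beta][H_{1,\xi}]$. If $s=(x,y,a_0,a_1)$ with $(x,y,a_0)$ in one of the restriction sets, then $s$ was already a forbidden iterand for $\forceQ_1$ itself, since there $a_1\in W[G_\beta][H_{1,\xi}]$. Thus the universal quantifier over new $a_1$ never bites: an iterand only ever codes a real from its own side of the product. This reduction to ``the tuple being coded is a $\forceQ_1$-name'' is the key observation, and I expect making it precise through the lifted names to be the delicate point.
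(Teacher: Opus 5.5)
Your proposal is correct and follows the paper's argument. In both, the product is realized as an interleaved weakly allowable iteration (in your case the concatenation $\forceQ_0 \ast \check{\forceQ}_1$), which is legitimate because the freshness condition is dropped, and suitability for $E_\beta \cup E_{\forceQ_0} \cup E_{\forceQ_1}$ is checked factor by factor from $\beta$-suitability plus the non-interference hypothesis. Your extra checks fill in details the paper leaves implicit: that the iterands are unchanged over $W[G_\beta][H_0]$, that the product embeds densely, and that each coded tuple lives in its own factor's extension, so the quantifier over new reals $a_1$ in clause 2 does no harm.
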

\begin{proof}
Because weakly allowable forcings do not require The freshness condition, the interleaving of the factors of $\forceQ_0$ and $\forceQ_1$ remains a valid weakly allowable iteration. Since $\forceQ_0$ is $\beta$-suitable, it strictly satisfies the conditions of Definition \ref{Def:Suitable} and avoids coding any tuples in $E_\beta \cup E_{\forceQ_0}$. By our non-interference hypothesis, it also avoids coding any tuple restricted by $E_{\forceQ_1}$. Symmetrically, $\forceQ_1$ avoids coding tuples in $E_\beta \cup E_{\forceQ_1} \cup E_{\forceQ_0}$. Therefore, their finite support product  avoids coding any tuples in the union $E_\beta \cup E_{\forceQ_0} \cup E_{\forceQ_1}$, preserving suitability over the combined restriction set.
\end{proof}

\subsection{The Alternating Iteration Scheme}
We fix a recursive partition $\langle P_m \mid m \in \omega \rangle$ of the Baire space $\omega^\omega$ into $\omega$-many pairwise disjoint sets. A real $x$ is assigned to the integer $m$ if and only if $x \in P_m$. 

Let $F: \omega_1 \to H(\omega_1)$ be a bookkeeping function such that every tuple in $H(\omega_1)$ appears cofinally often on both the even and odd ordinals. We fix a canonical wellordering $<_W$ of the ground model $W$. We proceed by induction on $\beta < \omega_1$. Let $E_0 = \emptyset$, and for limit stages $\lambda$, $E_\lambda = \bigcup_{\xi < \lambda} E_\xi$.  We evaluate $F(\beta)$ using the generic filter $G_\beta$ and proceed according to the parity of $\beta$:

\begin{enumerate}
    \item \textbf{Odd Stages (Towards $\mathbf{\Sigma}^1_3$-Uniformization):}
    If $\beta$ is odd, we interpret $F(\beta)$ as a tuple $(\dot{x}, \dot{p}, \dot{m}, \dot{\eta})$. We drop the dots on the variables to denote the evaluations of the names using $G_{\beta}$. If there is a real $y$ such that $W[G_\beta] \models \exists a_0 \phi_m(x, y,a_0,p)$, where $\phi_m$ is the $m$-th $\Pi^1_2$-formula, we let $y^*$ be such a real  which has the  $<_W$-least name. We force with $\code(x, y^*,a_0,p, m, 0, \eta)$ for the least $\eta$ where The freshness condition holds to encode this specific real $y^*$ into the $\vec{S}^0$ sequence. We update our restriction set $E_{\beta+1}$ to forbid any future coding of a different real $y' \neq y^*$ for the tuple $(x, y',p, m)$. If no such $y^*$ exists, we force trivially and set $E_{\beta+1} := E_\beta$.

    \item \textbf{Even Stages (Towards Failure of $\Sigma^1_4$-Uniformization):}
    If $\beta$ is even, we interpret $F(\beta)$ as a tuple $(\dot{x}, \dot{a}_0, \dot{m}, \dot{l}, \dot{\eta})$. Again, we drop the dots on the variables to denote the evaluations of the names using $G_{\beta}$. If $x \notin P_m$, we force trivially and set $E_{\beta+1} := E_\beta$. 
    
    If $x \in P_m$, our action depends on whether elements for the $x$-section of $A$ computed by our distinguished $\Sigma^1_4$-formula $\sigma(v_0,v_1)$ have already been defined earlier in our iteration, combined with the indicator bit $l \in \{0, 1\}$:
    \begin{itemize}
        \item \textbf{Option 1 (Creation: $l = 0$):} If elements for the $x$-section of $A$ have \emph{not} yet been defined at any earlier stage of $\forceP$, we generically adjoin $\eta$-many such values via the finite support product of Cohen forcings $\forceP(\beta)^{G_\beta} := (\prod_{i<\eta} \mathbb{C}_i) \times (\prod_{i<\eta} \mathbb{C}_i)$. Let $\{c_i(x,m) \mid i < \eta\}$ and $\{a_0^i \mid i < \eta\}$ denote the mutually generic Cohen reals added. We permanently forbid any coding into $\vec{S}^1$ involving these pairs :
        \[ E_{\beta+1} := E_\beta \cup \big\{ (x, c_i(x,m), a_0^i) \mid i < \eta \big\}. \]
        Note that this has the consequence that the $a_0^i$ witness that $(x,c_i(x,m))$ are elements of the  set $A$ defined by our $\sigma(v_0,v_1)$ in all outer models obtained with a $\beta+1$-suitable forcing.

        \item \textbf{Option 2 (Diagonalization: $l = 1$):} If elements of $A$ on the $x$-section \emph{were} already defined at an earlier stage $\gamma < \beta$, we consider the given real $a_0$ and the $m$-th $\Sigma^1_4$-formula $\varphi_m = \exists d_0 \forall d_1 \psi_m (v_0,v_1,d_0,d_1)$. We check if there exists a real $y$ such that the tuple $(x, y, a_0) \in E_\beta$. If so, we ask if there is a \emph{$\beta$-suitable} forcing $\forceQ \in W[G_\beta]$ that adds a real $a_1$ satisfying:
        \[ \forceQ \forces \neg\psi_m(x, y, a_0, \dot{a}_1). \]
        If such a $\forceQ$ exists, we let $\forceQ^*$ be the $<_W$-least such forcing, and set $\forceP(\beta)^{G_\beta} := \forceQ^*$. This actively forces $(x, y,a_0)$ out of $\forall d_1 \psi_m(v_0,v_1,v_2,d_1)$. We set $E_{\beta+1} := E_\beta$. {If no such forcing exists, we say that  we got stuck.}
    \end{itemize}
\end{enumerate}

\subsection{The ``Stuck'' Condition and Squaring the Forcing}
The iteration proceeds linearly unless we reach a stage $\beta$ with $F(\beta)= (\dot{x},\dot{a}_0,\dot{m},\dot{l}, \dot{\eta})$ where case 2, option 2 applies and we can no longer force some triple $(x, y,a_0)$ out of $\forall d_1 \psi_m(v_0,v_1,v_2,d_1)$, in other words we got stuck in our attempt to force $(x,y_0)$ out of $\exists d_0 \forall d_1 \psi_m (v_0,v_1,d_0,d_1)$.

\begin{definition}[The Stuck Condition]
We say the iteration is \emph{stuck} at stage $\beta$ if $F(\beta)$ evaluates to $(x, a_0, m, 1, \eta)$ with $x \in P_m$, there exists a real $y$ such that $(x, y, a_0) \in E_\beta$, and for \emph{every} $\beta$-suitable forcing $\mathbb{Q} \in W[G_\beta]$, we have:
\[ \Vdash_{\mathbb{Q}} \forall \dot{a}_1 \psi_m(x, y, a_0, \dot{a}_1). \]
\end{definition}

If the iteration gets stuck at stage $\beta$ for a real $y$, we must alter the ground model. Let $\gamma < \beta$ be the stage where the elements for the $x$-section of $A$  were generically added via $\mathbb{P}(\gamma)$. The intermediate iteration factor $\mathbb{P}_{\gamma, \beta} \cong \mathbb{P}(\gamma) \ast \mathbb{P}_{\gamma+1, \beta}$ is a $\gamma$-suitable forcing, and it has the property that:
\begin{align*}
   \mathbb{P}(\gamma) \Vdash \exists \dot{a}_0 \Big(& \dot{a}_0 \text{ is a } \mathbb{P}_{\gamma+1, \beta}\text{-name} \land \\& \forceP_{\gamma+1,\beta} \Vdash ``\forall \mathbb{Q} \text{ } \beta\text{-suitable, } \mathbb{Q} \Vdash \forall a_1 \psi_m(x, \dot{y}, \dot{a}_0, a_1)" \Big). 
\end{align*}

\begin{lemma}[The Squaring Argument]
If the iteration is stuck at stage $\beta$, then, working over $W[G_{\gamma}]$, we can replace the factor $\mathbb{P}_{\gamma, \beta}$ with its product square $\mathbb{P}_{\gamma, \beta} \times \mathbb{P}_{\gamma, \beta} $ to create a new model in which the $x$-section of $A_m$ contains at least two distinct elements.
\end{lemma}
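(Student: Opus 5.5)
We force over $W[G_\gamma]$ with two mutually generic copies $H_0 \times H_1$ of $\mathbb{P}_{\gamma,\beta}$ in place of the single copy. By genericity of $H_0\times H_1$, each coordinate $H_i$ is $\mathbb{P}_{\gamma,\beta}$-generic over $W[G_\gamma]$. The Cohen factors $\mathbb{P}(\gamma)$ in the two coordinates produce disjoint families $\{c_i^0(x,m)\}$ and $\{c_i^1(x,m)\}$ of mutually generic reals, so the pair $(c^0_i,c^1_i)$ consists of distinct reals. Inside $W[G_\gamma][H_0]$ the stuck statement displayed before the lemma holds for some $y^0\in\{c_i^0(x,m)\}$ and a witness $a_0^0$; symmetrically, inside $W[G_\gamma][H_1]$ it holds for $y^1$ and $a_0^1$. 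The goal is to transfer both statements to the product model $W[G_\gamma][H_0\times H_1]$. There they will show that $(x,y^0)$ and $(x,y^1)$ both lie in $A_m$, i.e.\ that both satisfy $\exists d_0\forall d_1\psi_m$.

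\textbf{Step 1: the product is a legal continuation.} We apply Lemma~\ref{Lemma:ProductSuitable} to the two copies. The hypothesis to check is non-interference: $H_0$ never codes into $\vec{S}^1$ a tuple blacklisted by $E_{H_1}$, and vice versa. By construction, the blacklisted tuples of copy $1$ are of the form $(x,c_i^1(x,m),a_0^{1,i})$, where the reals involved are Cohen-generic over $W[G_\gamma][H_0]$. Every coding factor of copy $0$ is determined by names evaluated in $W[G_\gamma][H_0\restriction\xi]$, so it cannot mention those reals. Hence $\mathbb{P}_{\gamma,\beta}\times\mathbb{P}_{\gamma,\beta}$ densely embeds into a weakly allowable forcing. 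This forcing is suitable with respect to $E_\gamma\cup E_{H_0}\cup E_{H_1}$, which contains $E_\beta$ as computed in either copy. Since coding areas may repeat, the set $D^G$ of reused areas stays countable.

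\textbf{Step 2: transfer of the stuck property.} Consider the model $W[G_\gamma][H_0]$, in which we are stuck. Viewed from this model, the second factor $\mathbb{P}_{\gamma,\beta}$ (as evaluated for $H_1$) is a forcing that avoids the restriction set $E_{H_0}$ by Step~1. It is therefore a $\beta$-suitable forcing over $W[G_\gamma][H_0]$. The stuck condition quantifies over \emph{every} $\beta$-suitable $\mathbb{Q}$, so applying it with $\mathbb{Q}=\mathbb{P}_{\gamma,\beta}^{(1)}$ yields
\[ W[G_\gamma][H_0][H_1]\models \forall a_1\,\psi_m(x,y^0,a_0^0,a_1). \]
The same argument holds with the roles of the copies exchanged. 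Thus in the squared model $\varphi_m(x,y^0)$ and $\varphi_m(x,y^1)$ both hold, witnessed by $a_0^0$ and $a_0^1$. The stuck clause also covers all later $\beta$-suitable forcings, so the conclusion persists into the remainder of the iteration, which will be suitable.

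\textbf{Main obstacle.} The key point is Step~2: checking that one copy really counts as $\beta$-suitable over the model generated by the other. The definition of suitability is relative to the model, and the bookkeeping $F$ is evaluated along $H_1$ rather than along a sequence that is generic over $W[G_\gamma][H_0]$. I would try to handle this by noting two things. First, weak allowability is checked stagewise from names in $W[G_\gamma]$, and these are interpreted identically whether or not $H_0$ is present. Second, the only way suitability could fail is by coding a blacklisted tuple of $E_{H_0}$, and Step~1 rules this out. A secondary point to verify is that $y^0\neq y^1$, which follows from mutual genericity of the two Cohen factors.
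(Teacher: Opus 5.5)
Your proposal is correct at the paper's level of rigor and follows essentially the same route. You force with the product square over $W[G_\gamma]$, use Lemma~\ref{Lemma:ProductSuitable} (with non-interference supplied by mutual genericity) to see that the square is suitable for the combined restriction set, and then use the fact that the stuck condition quantifies over all suitable forcings to obtain $\forall a_1\,\psi_m(x,y^j,a_0^j,a_1)$ for both $j$, with $y^0\neq y^1$. Your Step~2, which plugs the other copy in as the $\beta$-suitable $\mathbb{Q}$ in the stuck condition, is simply a more explicit version of the paper's appeal to ``the forcing theorem applied in $W[G_\gamma]$.''
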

\begin{proof}
We work over the universe $W[G_\gamma]$ and force with $\mathbb{P}_{\gamma, \beta}^{(2)} \cong (\mathbb{P}(\gamma) \ast \mathbb{P}_{\gamma+1, \beta}) \times (\mathbb{P}(\gamma) \ast \mathbb{P}_{\gamma+1, \beta})$. Let $G_{\gamma, \beta}^{(2)}=G_{\gamma,\beta}^0 \times G^1_{\gamma,\beta}$ be the generic filter. Let $E_{\gamma,\beta}^i$, for $i \in 2$ denote the two restriction sets we obtained when arriving at the model $W[G_{\gamma}][G_{\gamma,\beta}^0]$ and $W[G_{\gamma}][G_{\gamma,\beta}^1]$ respectively.

Crucially, by Lemma \ref{Lemma:ProductSuitable} and as neither $W[G_{\gamma}] [G^0_{\gamma,\beta}]$ will  code any triples of reals in $E^1_{\beta}$, nor $W[G_{\gamma}] [G^1_{\gamma,\beta}]$ will  code any triples of reals in $E^0_{\beta}$ its product square $\mathbb{P}_{\gamma, \beta}^{(2)}$ is also a  suitable forcing with respect to the combined restriction set $E^0_{\beta} \cup E^1_{\beta}$. The generic $G_{\gamma, \beta}^{(2)}$ yields two mutually generic evaluations of the real with name $\dot{y}$, denoted $y^0$ and $y^1$, and provides two evaluations of the name $\dot{a}_0$, denoted $a_0^0$ and $a_0^1$, stemming from the left and right coordinates of the generic filter.

By the forcing theorem applied in $W[G_\gamma]$, and because the stuck condition quantifies over all suitable forcings with restriction set $E^0_{\beta}$ and $E^1_{\beta}$ respectively the new model $W[G_\gamma][G_{\gamma, \beta}^{(2)}]$ satisfies for each $j \in \{0, 1\}$:
\[ \exists a_0^j \forall \mathbb{Q} \big( \mathbb{Q} \text{ is suitable with restriction } E^{0}_{\beta} \cup E^1_{\beta} \implies \mathbb{Q} \Vdash \forall a_1 \psi_m(x, y^j, a_0^j, a_1) \big). \]
Because $y^0 \neq y^1$, in any further suitable extension with restriction sets extending $E^0_{\beta} \cup E^1_{\beta}$, both $(x, y^0)$ and $(x, y^1)$ will satisfy $\exists a_0 \forall a_1 \psi_m$. Thus, $y^0, y^1 \in (A_m)_x$. Since the $x$-section of $A_m$ contains at least two distinct elements, $A_m$ is not the graph of a function and cannot uniformize $\sigma$.
\end{proof}

To summarize, if we arrive in our linear iteration at a stage $\beta$, where we got stuck, we proceed in squaring the forcing $\forceP_{\gamma,\beta}$, where $\gamma$ is the least stage where values for $\sigma$ at the $x$-section are introduced. Consequently,  the squaring operation permanently neutralizes the set $A_m$  as the graph of a  potential uniformizing function of the $\Sigma^1_4$-set associated with $\sigma(v_0,v_1)$. After a squaring occurred, we start our linear iteration again, until we get stuck in which we just repeat the squaring step and so on. This way, we will iterate $\omega_1$-many steps using our bookkeeping $F$ we fixed in advance. 
Since there are only countably many lightface $\Sigma^1_4$ formulas (and hence countably many indices $m \in \omega$), this rollback and squaring process can occur at most $\omega$ times. Let $\beta^* < \omega_1$ be the supremum of all stages where a squaring operation occurs. Because it is a countable supremum of countable ordinals, $\beta^* < \omega_1$. After stage $\beta^*$, the actual iteration $\mathbb{P}_{\omega_1}$ completely stabilizes and runs linearly as a suitable forcing up to $\omega_1$. Let $G_{\omega_1}$ be generic over $W$ and let $W[G_{\omega_1}]$ denote the final universe. This will be the universe where $\Sigma^1_4$-uniformization fails and $\boldsymbol{\Sigma}^1_3$-uniformization holds as we shall show now.

\section{Proof of the Main Theorem}

\begin{theorem} \label{MainTheorem}
In the generic extension $W[G_{\omega_1}]$, the boldface $\mathbf{\Sigma}^1_3$-uniformization property holds, yet the lightface $\Sigma^1_4$-uniformization property fails.
\end{theorem}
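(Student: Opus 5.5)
The proof splits into two independent halves: one establishing $\mathbf{\Sigma}^1_3$-uniformization and one establishing the failure of $\Sigma^1_4$-uniformization. Both rest on a common analysis of the final iteration. After the last squaring at stage $\beta^*<\omega_1$, the forcing $\mathbb{P}_{\omega_1}$ is a weakly allowable iteration whose set $D^{G}$ of reused coding areas is countable, since reuse only arises from the at most $\omega$ many squarings. We fix a real $d$ coding $D^{G_{\omega_1}}$, together with the countably many ``spoiled'' codes produced by the squared copies. By the weak form of Lemma \ref{Lemma:NoUnwantedCodes}, in $W[G_{\omega_1}]$ a tuple is coded into $\vec{S}^l$ at an area outside $D^{G}$ if and only if the bookkeeping explicitly coded it there. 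We also use Lemma \ref{Lemma:PropertiesOfAllowable}(2): the final iteration is ccc, so every real of $W[G_{\omega_1}]$ appears at some countable stage $W[G_\beta]$.

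\textbf{$\mathbf{\Sigma}^1_3$-uniformization.} Let $R$ be $\Sigma^1_3(p)$, say defined by $\exists a_0\,\phi_m(x,y,a_0,p)$ with $\phi_m$ a $\Pi^1_2$ formula. We define the candidate uniformizing function $f$ by: $f(x)=y$ iff there are $a_0$ and $r$ such that $\Psi_0(r,(x,y,a_0,p,m))$ holds, the coding area witnessed by $r$ is not among those listed by $d$, and $\phi_m(x,y,a_0,p)$ holds. Since $\Psi_0$ is $\Pi^1_2$, this is $\Sigma^1_3(p,d)$, i.e.\ boldface.

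For totality on $\operatorname{dom}(R)$, take $x$ with $R$-section nonempty in the final model. By ccc, $x$, $p$ and a witness appear in some $W[G_\beta]$, and the bookkeeping lists $(x,p,m,\eta)$ cofinally often at odd stages. We need that, at such a stage, $W[G_\beta]$ already sees a witness. Upward absoluteness of $\Sigma^1_3$ statements fails in general, so the argument has to fix a stage $\beta$ by which the witness pair $(y,a_0)$ exists; the $\Pi^1_2$ part $\phi_m$ is then absolute between $W[G_\beta]$ and the final model by Shoenfield absoluteness. The odd stage then codes the $<_W$-least such $y^*$.

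For single-valuedness we use two facts. First, the restriction sets $E_\beta$ forbid coding a second value for the same $(x,p,m)$. Second, by the No Unwanted Codes lemma outside $d$, the only codes present are the intended ones. The subtle point is that a later odd stage could find a $<_W$-smaller witness. To handle this, I would have the odd stage act only the first time $(x,p,m)$ is handled with a nonempty section, and thereafter force trivially.

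\textbf{Failure of $\Sigma^1_4$-uniformization.} Let $A$ be the set defined by $\sigma$. First, $A$ has nonempty $x$-sections for every $x$. By bookkeeping, every $x$ with $x\in P_m$ receives Option 1 at some even stage. The restriction set then ensures that each tuple $(x,c_i,a_0^i,a_1)$ is never coded for any $a_1$, so $\sigma(x,c_i)$ holds; here we use that all later forcings are suitable, and that no unwanted codes land on $\vec{S}^1$ outside $D^{G}$. Now suppose some $\varphi_m$ uniformizes $A$. Choose $x\in P_m$, so that $\varphi_m$ selects a value $y$ with $(x,y)\in A$. Since $A$ is defined using its restriction-set witnesses, $y=c_i(x,m)$ with a witness $a_0=a_0^i$. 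The tuple $(x,a_0,m,1,\eta)$ is then handled at some even stage $\beta$. There are two cases. If a suitable $\mathbb{Q}$ exists, it adds $a_1$ with $\neg\psi_m(x,y,a_0,a_1)$; this kills that particular witness, and cofinal revisits handle every potential $a_0$. If we get stuck, the Squaring Argument shows that $(A_m)_x$ has two elements, so $A_m$ is not a function.

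\textbf{Main obstacle.} The main obstacle is the diagonalization step. We must show that when all $a_0$ are killed, $y$ genuinely leaves $(A_m)_x$. This means that every $\Pi^1_3$ fact forced by the witnesses $a_1$ at $\beta+1$ survives into $W[G_{\omega_1}]$, even though $\neg\psi_m$ is $\Pi^1_3$ and is not upward absolute. I would try to run this by quantifying in the stuck/non-stuck dichotomy over the tail $\mathbb{P}_{\beta+1,\omega_1}$ itself. Since that tail is $\beta$-suitable, the dichotomy then applies to it directly, and either the tail preserves $\neg\psi_m$ or the stuck condition triggers squaring.
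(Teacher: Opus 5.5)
Your outline is the same as the paper's: witnesses are coded into $\vec{S}^0$ at odd stages, a real parameter absorbs the countably many squared stages, and even stages use the stuck/squaring dichotomy. But the $\Sigma^1_4$ half has a genuine gap at the step ``since $A$ is defined using its restriction-set witnesses, $y=c_i(x,m)$''. Nothing supports this.

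\textbf{The sections of $A$ are not controlled.} $\sigma(x,y)$ holds as soon as \emph{some} $a_0$ admits no $a_1$ with $(x,y,a_0,a_1)$ coded into $\vec{S}^1$. The construction never sets out to code anything into $\vec{S}^1$. The restriction sets only put the protected pairs into $A$; they remove nothing. Apart from accidental codes at reused areas, the only $\vec{S}^1$-codes are whatever the forcings $\mathbb{Q}$ chosen in Option 2 happen to add. So nothing prevents, say, $\sigma(x,0)$ from holding for every $x$, and then the constant function $0$ is a $\Delta^1_1$ uniformization of $A$. More generally, a candidate $\varphi_m$ may select an unprotected value, and Option 2 never attacks it, because Option 2 only acts when $(x,y,a_0)\in E_\beta$.

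\textbf{Only one witness is ever attacked.} The same clause undercuts ``cofinal revisits handle every potential $a_0$''. Option 2 only ever attacks the single real $a_0^i$ recorded in $E_\beta$, so it conflates the $\sigma$-witness with the $\varphi_m$-witness. Removing $c_i(x,m)$ from $(A_m)_x$ needs some $d_1$ with $\neg\psi_m(x,c_i(x,m),d_0,d_1)$ for \emph{every} $d_0$.

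At a minimum you would need two new ingredients. First, an extra task that codes, for every unprotected $y$ and every $a_0$, some $(x,y,a_0,a_1)$ into $\vec{S}^1$, so that $A_x$ consists of the protected values only. Second, an Option 2 that ranges over all $d_0$. The paper's Case 1 makes the same leap: it infers non-uniformization from $(x,c_i(x,m))\in\sigma_x\setminus(A_m)_x$ and says nothing about preservation. So it does not supply these ingredients either.

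\textbf{Your preservation repair is not a dichotomy.} You correctly isolate the problem: a later stage can add a witness to the $\Sigma^1_3$ statement $\psi_m(x,y,a_0,a_1)$, which then persists. But the tail $\mathbb{P}_{\beta+1,\omega_1}$ does not exist when stage $\beta$ is defined. It depends on the choice made at $\beta$ and on later squarings, which may even roll back below $\beta$. The only way to make your idea precise is to demand a $\mathbb{Q}$ after which $\neg\psi_m$ survives \emph{every} further suitable forcing.

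The failure of that demand is merely a density statement: each real $a_1$ of each suitable extension can be pushed into $\psi_m$ by some further suitable forcing. This is far weaker than the stuck condition $\Vdash_{\mathbb{Q}}\forall a_1\,\psi_m(x,y,a_0,a_1)$. The squaring argument genuinely needs the strong form, since $\forall d_1\,\psi_m(x,y^j,d_0^j,d_1)$ must hold in the final model also for reals $d_1$ added after the squaring. Bridging the two would require further bookkeeping that actively makes $\psi_m$ true for each new $d_1$, plus a proof that those tasks stay suitable.

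\textbf{A smaller point on the $\Sigma^1_3$ side.} Your $f$ discards every code at an area listed in $d$, and odd stages act only on the first visit to $(x,p,m)$. So $f$ is undefined on the countably many $x$ whose codes all sit in squared intervals. The paper avoids this by patching on $x$ rather than on coding areas: $R^*$ lists a chosen witness for each $x$ in the countable set $C$.
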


\begin{proof}
We first verify the global failure of lightface $\Sigma^1_4$-uniformization. Consider our designated lightface $\Sigma^1_4$ relation $\sigma(v_0, v_1)$. Let $A_m$ be an arbitrary lightface $\Sigma^1_4$ relation. Because the iteration stabilized, exactly one of two outcomes occurred for the index $m$:
\begin{enumerate}
    \item \textbf{The iteration completed without getting stuck for $m$:} For every $x \in P_m$, reals $c_i(x,m)$ were generically added exactly once, and their ``not being coded'' status was permanently protected by the restriction sets $E_\beta$. Over the linear stages, we successfully forced $(x, c_i(x,m)) \notin A_m$. Thus, $(x, c_i(x,m)) \in \sigma_x \setminus (A_m)_x$, and $A_m$ fails to uniformize $\sigma$.
    \item \textbf{The iteration got stuck for $m$:} The Squaring Argument was invoked prior to $\beta^*$. The relation $A_m$ was forced to contain both $(x, y^0)$ and $(x, y^1)$. Because $y^0 \neq y^1$, the relation $A_m$ is not the graph of a function, and thus cannot uniformize $\sigma$.
\end{enumerate}
In all cases, no lightface $\Sigma^1_4$ relation can uniformize $\sigma$.

We now verify the truth of boldface $\mathbf{\Sigma}^1_3$-uniformization. Let $A_m(p)$ be a $\mathbf{\Sigma}^1_3$ relation in the plane defined by the parameter $p \in \omega^\omega \cap W[G_{\omega_1}]$. 

Because of the rollbacks that occurred prior to stage $\beta^*$, any real $x$ processed during a squared interval $[\gamma, \beta)$ may have had multiple distinct witnesses $y^i$ coded into the $\vec{S}^0$ sequence along the different branches of the product forcing. Therefore, the coding relation
\[ U_m(x, y,p) \equiv \exists a_0 (x,y,a_0,p, m) \text{ is coded into } \vec{S}^0 \]
might not be single-valued for reals processed before $\beta^*$. 

However, the set of all reals added or processed before stage $\beta^*$ is countable in $W[G_{\omega_1}]$. Let $C \subset \omega^\omega$ be this countable set. Because $C$ is countable, we can fix a single real parameter $R^* \in W[G_{\omega_1}]$ that explicitly encodes a chosen uniformizing witness $z_x$ for every $x \in C \cap pA_m(p)$. 

For any real $x \notin C$, $x$ was processed at some odd stage $\alpha > \beta^*$. Because the iteration is strictly linear after $\beta^*$, exactly one $<_W$-least witness $z^*$ was selected and locked into $\vec{S}^0$. Thus, for all $x \notin C$, the relation $U_m(x, y,p)$ is strictly single-valued.

We can now define our uniformizing function $f_{m,p}(x)$ using the parameter $R^*$:
\[
f_{m,p}(x) = z \iff 
\begin{cases} 
R^* \text{ specifies } z \text{ as the witness for } x, & \text{if } x \in C \\
U_m(x,y,p) \text{ holds}, & \text{if } x \notin C 
\end{cases}
\]
This patched relation is clearly $\mathbf{\Sigma}^1_3(R^*, p)$, it has the same domain as $A_m(p)$, and it is strictly single-valued. Thus, boldface $\mathbf{\Sigma}^1_3$-uniformization holds. This completes the proof.
\end{proof}

\section{Separating $\Sigma^1_3$- and $\Sigma^1_4$ uniformization}

We next present an argument which produces a universe where $\Sigma^1_3$-uniformization holds, yet no projective level of complexity $\ge 4$ has the uniformization property. The argument is a generalization of an old result of A. L\'evy to inner models with large cardinals. We shall force over the inner model $\Lsharp$ and we proceed to introduce the properties of $\Lsharp$ which we need in our proof.

\subsection{Some useful properties of $\Lsharp$}

Recall that $\Lsharp$ denotes the least inner model which is closed under $X \mapsto X^{\#}$ for every set $X$. It is well-known that $ L^{\#}$ can be arranged as a mouse $M$ (where every extender added is a sharp), and for every level $M_\eta$ of $M$ and $\Sigma_1$-elementary substructure $\overline{M}$ of $M_\eta$ collapses to some $M_{\eta'}$ for some $\eta' \le \eta$.

We shall make crucial use of the fact that countable initial segments of $\Lsharp$ obey a $\Pi^1_2$ definition. Say a ``simple $x$-mouse'' is a mouse $M$ of the form in the hierarchy of $\Lsharp_x$: it has $x$ at the bottom, and for every segment of $M$ which has an active extender $F$, the extender is a ``sharp'', meaning that letting $\kappa = \text{cr}(F)$, then there is a bound $\lambda < \kappa$ such that $M$ has no extenders with index strictly between $\lambda$ and $\text{OR}^M$. (As usual, every proper segment of $M$ must be fully sound.) It is a $\Pi^1_2(x, z)$ statement to say ``$z$ codes a simple $x$-mouse'' (and soundness is also only roughly first-order over $M$, so it's also $\Pi^1_2$ to say ``$z$ codes a sound simple $x$-mouse''). Every sound simple $x$-mouse in $\Lsharp_x$ is in fact a proper segment of $\Lsharp_x$. We make heavy use of 

\begin{theorem}
$L^{\#}$ is two-step $\Sigma^1_3$ generically absolute.
\end{theorem}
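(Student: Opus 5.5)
We interpret ``two-step $\Sigma^1_3$ generically absolute'' in the usual way. For every set forcing $\mathbb{P}\in L^{\#}$, every $L^{\#}$-name $\dot{\mathbb{Q}}$ for a set forcing, every $G\ast H$ generic over $L^{\#}$, every real $x\in L^{\#}[G]$ and every $\Sigma^1_3$ formula $\varphi$, we want
\[ L^{\#}[G]\models\varphi(x)\iff L^{\#}[G][H]\models\varphi(x). \]
Upward absoluteness follows from Shoenfield absoluteness, since $\Pi^1_2$ statements are absolute between $L^{\#}[G]$ and $L^{\#}[G][H]$. So the work is in downward absoluteness. Write $\varphi(x)\equiv\exists y\,\theta(x,y)$ with $\theta\in\Pi^1_2$, and assume a witness $y\in L^{\#}[G][H]$ exists. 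We must produce a witness in $L^{\#}[G]$.

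The plan is to use the sharps available in $L^{\#}[G]$. Every set in $L^{\#}$ has a sharp, and sharps survive set forcing: $X^{\#}$ for $X\in L^{\#}$ yields $(X,G)^{\#}$ in $L^{\#}[G]$, computed in the usual way from $X^{\#}$ together with the fact that $G$ is generic over a model whose indiscernibles are unaffected by small forcing. Fix a name $\dot y$ and a condition forcing $\theta(x,\dot y)$. Work in $L^{\#}[G]$ with $a:=(\mathbb{Q},\dot y,x)$, and consider the sharp $a^{\#}$ together with the model $L[a]$ and its class of indiscernibles. The statement ``some condition in $\mathbb{Q}$ forces $\theta(x,\dot y)$ over $L[a]$'' transfers down to a countable iterable structure. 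Let $\bar M$ be a countable elementary hull of $L_\kappa[a]$, for $\kappa$ an $a$-indiscernible, and collapse it; $\bar M$ is iterable because it is coded by $a^{\#}$. In $L^{\#}[G]$ the model $\bar M$ is countable, so we can build a $\bar{\mathbb{Q}}$-generic $\bar h$ over $\bar M$ and evaluate $\bar y=\dot{\bar y}^{\bar h}$.

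We then check $\theta(x,\bar y)$ in $L^{\#}[G]$. Since $\theta$ is $\Pi^1_2$, by Shoenfield it suffices that $\bar M[\bar h]$ can be iterated, through its sharp and indiscernibles, to an initial segment of the universe long enough that $\Pi^1_2$ truth is reflected correctly. Concretely, $\bar M[\bar h]\models\theta(x,\bar y)$, and $\omega_1$-iterability of the structure guarantees that $\Sigma^1_2$ counterexamples would appear in an iterate, hence already in $\bar M[\bar h]$ by elementarity. This is the standard argument that models closed under sharps are $\Sigma^1_3$-correct in generic extensions (Martin--Solovay style, using $\Pi^1_2$ absoluteness for iterable models with indiscernibles). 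The two-step phrasing is handled by applying this argument in $L^{\#}[G]$, which is still closed under sharps.

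The main obstacle is justifying that $L^{\#}[G]$ remains closed under sharps for all sets, and that the countable hull $\bar M[\bar h]$ is sufficiently iterable, so that $\Pi^1_2$ truth in it agrees with $V$. For this I would use the mouse representation from the preceding paragraph: every sound simple $x$-mouse is $\Pi^1_2$-definable and is a segment of $L^{\#}_x$. This lets us replace iterability arguments with comparison of simple mice inside $L^{\#}[G]$.
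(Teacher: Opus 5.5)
Your overall strategy matches the paper's idea, but there is a genuine gap at the one step that carries the weight. The shared strategy is: $L^{\#}[G]$ is closed under sharps; reflect the forcing statement to a countable structure in which a generic can be built inside $L^{\#}[G]$; then certify the $\Pi^1_2$ matrix via a sharp and Shoenfield. The paper packages this as $L^{\#}[x]\prec_{\Sigma^1_3}L^{\#}[G]$ for reals $x$, using $L^{\#}[x]=L^{\#}_x$, which is a harmless difference. Your Shoenfield transfer of $\theta(x,y)$ from $L^{\#}[G][H]$ down to $L[a][H]$ is also fine.

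The gap is this. The collapse of a countable hull of $L_\kappa[a]$ is just some $L_{\bar\kappa}[\bar a]$, which carries no measure. So ``$\bar M$ is iterable because it is coded by $a^{\#}$'' has no content. Nothing in your construction makes $\bar\kappa$ an $\bar a$-indiscernible or gives $L_{\bar\kappa}[\bar a]\prec L[\bar a]$. Without such a link, $\bar M[\bar h]\models\theta(x,\bar y)$ tells you nothing about $V$, because $\Pi^1_2$ statements are not upward absolute from countable transitive models. For example, ``every real is constructible'' is $\Pi^1_2$, holds in any countable $L_\alpha\models\mathsf{ZFC}^-$, and fails in $L^{\#}$.

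The missing idea is to put the sharp itself into the hull and show that its collapse is the \emph{true} sharp of the collapsed set. This is exactly what the paper does:
\begin{enumerate}
\item It takes $N=(L^{\#}|_{\kappa})[x]$ and includes $N^{\#}$ in the countable hull.
\item The collapse of $N^{\#}$ is a countable premouse embedding into an iterable one, hence iterable, hence equal to $\bar N^{\#}$.
\item This extends over the small generic $g$ to the true $(\bar N[g])^{\#}$.
\item So the $\Pi^1_2$ fact seen by $M[g]$ through this sharp holds in the proper class model $L[\bar N[g]]$, and therefore in $V$ by Shoenfield.
\end{enumerate}
Once the sharp is in the hull, your own route also works: iterate it $\omega_1$ times and reflect a $\Sigma^1_2$ counterexample back, provided $\bar{\mathbb{Q}}$ lies below the critical point so that $\bar h$ lifts along the iteration.

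The remedy in your last paragraph does not supply this. The $\Pi^1_2$-definability and linear ordering of sound simple mice tell you how mice compare. They say nothing about whether an arbitrary collapsed hull of $L_\kappa[a]$ is correct about $\Pi^1_2$ truth. Alternatively, you could cite the Martin--Solovay theorem (closure under sharps implies $\Sigma^1_3$-absoluteness for set forcing) inside $L^{\#}[G]$, together with your sharps-preservation remark. The hand-built argument as written does not prove that theorem.
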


\begin{proof}
Let $G$ be set generic over $L^{\#}$.
Given a real $x$, let $\Lsharp_x$ denote the minimal transitive proper class containing $x$ and closed under sharps (for sets).

\begin{claim}
Let $x$ be a real in $L^{\#}[G]$ (so $L^{\#}[x]$ is a sub-generic extension). Then $L^{\#}[x] = L^{\#}_x$.
\end{claim}

\begin{proof}
We have $L^{\#}[x] \subseteq L^{\#}_x$ because $L^{\#} \subseteq L^{\#}_x$ by the minimality of $\Lsharp$ under closure under \#'s, and of course because $x \in \Lsharp_x$.
And $\Lsharp_x \subseteq \Lsharp[x]$ by the minimality of $\Lsharp_x$ (as $x \in \Lsharp[x]$ and $\Lsharp[x]$ is closed under sharps).
\end{proof}

\begin{claim}
$\Lsharp[x] \prec_{\Sigma^1_3} \Lsharp[G]$.
\end{claim}

\begin{proof}
Suppose that $\Lsharp [G] \models \phi(y)$ where $\phi$ is $\Sigma^1_3$ and $y \in R \cap \Lsharp [x]$. Now $\Lsharp [G]$ is a forcing extension of $\Lsharp [x]$, by say forcing $\mathbb{P} \in \Lsharp [x]$.
Work in $\Lsharp [x]$. Let $\kappa$ be large enough that
$$x, y, \mathbb{P} \in N = (\Lsharp |_{\kappa})[x];$$
and then let $\lambda > \kappa$ be such that $N^\# \in (\Lsharp |_{\lambda})[x]$. Let $X \prec (\Lsharp|_{\lambda})[x]$ with $\mathbb{P}, x, y, N, N^\# \in X$ and $X$ countable. Let $M$ be the transitive collapse of $X$ and write $\overline{\mathbb{P}}$ for the transitive collapse image of $\mathbb{P}$, etc. Let $g$ be $M$-generic for $\overline{\mathbb{P}}$ and such that $M[g] \models \phi(y)$. Write $\phi(y) = \exists z \psi(y, z)$ with $\psi$ being $\Pi^1_2$. Let $z \in M[g]$ be such that $M[g] \models \psi(y, z)$. Then $\psi(y, z)$ is really true, because the collapse image of $N^\#$ is the true $\overline{N}^\#$, and this extends to give the true $(\overline{N}[g])^\#$, and $(\overline{N}[g])^\#$ thinks $\psi(y, z)$ is true, so it really is true. But $z \in \Lsharp[x]$, so $\Lsharp[x] \models \phi(y)$.
\end{proof}

\end{proof}

We can now prove that $\Lsharp[G]$ satisfies ${\Sigma}^1_3$-uniformization. 
\begin{theorem}
In any set generic extension of $L^{\#}$, denoted by $L^{\#}[G]$, $\Sigma^1_3$-uniformization holds.
\end{theorem}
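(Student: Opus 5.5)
Given a $\Sigma^1_3$ relation $A(x,y)\equiv \exists z\,\theta(x,y,z)$ with $\theta$ a $\Pi^1_2$ formula, we define the uniformizing function inside each $\Lsharp[x]$ by means of the canonical wellorder of $\Lsharp[x]=\Lsharp_x$, and then transfer to $\Lsharp[G]$ using the absoluteness result above. We set
\[
f(x)=y \iff \text{$(y,z)$ is the least pair, in the canonical order of $\Lsharp_x$, such that } \theta(x,y,z)
\]
for some $z$. This defines a partial function.

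The steps are as follows.

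\emph{Step 1 (good wellorder).} We show that the order of the reals of $\Lsharp_x$ is good and $\Sigma^1_3$ (in fact $\Delta^1_3$ on its domain) uniformly in $x$. Reals of $\Lsharp_x$ appear in countable proper segments of $\Lsharp_x$, which are exactly the sound simple $x$-mice. So $u<_x v$ iff there is a real coding a sound simple $x$-mouse $M$ with $u,v\in M$ and $u<_M v$, where $<_M$ is the order of construction. Since ``codes a sound simple $x$-mouse'' is $\Pi^1_2$, this is $\Sigma^1_3(x)$. Initial segments are handled uniformly by the same witnessing mouse: $\{u : u<_x v\}$ is the set of reals of the $x$-mouse level in which $v$ appears, and it is countable and coded by a single real. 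Quantification over predecessors therefore becomes a quantifier over members of a real, which keeps complexity low. The comparison of two mice (both initial segments of $\Lsharp_x$) is decided by the lemma that every sound simple $x$-mouse is a proper segment of $\Lsharp_x$.

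\emph{Step 2 (complexity of $f$).} Let $f(x)=y$ iff there exist $z$ and a code $w$ of a sound simple $x$-mouse $M$ containing $(y,z)$ such that $\theta(x,y,z)$ holds, and for all pairs $(y',z')\in M$ that precede $(y,z)$ in $<_M$ we have $\neg\theta(x,y',z')$. The final clause is a bounded quantifier over $M$ (equivalently, over the reals coded in $w$) applied to a $\Sigma^1_2$ matrix, and $\Sigma^1_2$ is closed under such number-indexed quantifiers. So $f$ is $\Sigma^1_3$, and it is single-valued because $<_x$ is a wellorder of the reals of $\Lsharp_x$.

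\emph{Step 3 (same domain).} We must check that if $\Lsharp[G]\models\exists y\,A(x,y)$, then some witness pair $(y,z)$ lies in $\Lsharp[x]$. This follows from the Claim $\Lsharp[x]\prec_{\Sigma^1_3}\Lsharp[G]$: the statement $\exists y\exists z\,\theta(x,y,z)$ is $\Sigma^1_3(x)$, so it holds in $\Lsharp[x]$, and since $\theta$ is $\Pi^1_2$, Shoenfield absoluteness gives that $\theta(x,y,z)$ holds in $\Lsharp[G]$ for the witness found there. Conversely, witnesses found in $\Lsharp[G]$ satisfy the same $\Pi^1_2$ condition when checked in either model, so the definition of $f$ evaluated in $\Lsharp[G]$ agrees with the intended least pair. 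Hence $\operatorname{dom} f=\operatorname{dom} A$ and $f\subseteq A$.

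The main obstacle is Step 1. We need to justify that the minimality of the witness is computed correctly from the countable mouse $M$, so that every real of $\Lsharp_x$ preceding $(y,z)$ already lies in $M$. We also need the $\Pi^1_2$ soundness and iterability-type characterisation quoted above to identify arbitrary coded sound simple $x$-mice in $\Lsharp[G]$ with genuine segments of $\Lsharp_x$. We would rely on the stated fact that such mice are proper segments, together with the condensation property quoted for $\Lsharp$.
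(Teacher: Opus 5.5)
Your proposal is correct and follows the paper's strategy. You define a least witness through reals coding countable sound simple $x$-mice, get single-valuedness from the fact that such mice line up as segments of $\Lsharp_x$, and get the right domain from the claim $\Lsharp[x]\prec_{\Sigma^1_3}\Lsharp[G]$. The one variation is in the matrix: the paper keeps it internal to $M$, requiring $M\models\psi(x,y,z)$ with $\langle x,y,z\rangle^\#\in M$ and minimizing over initial segments of $M$, whereas you evaluate $\theta$ and $\neg\theta$ in $\Lsharp[G]$ using Shoenfield absoluteness and closure of $\Sigma^1_2$ under number quantifiers. Your version works provided pairs are ordered by the order of construction, so that every $<_x$-predecessor of $(y,z)$ lies in $M$; with the lexicographic order it would not be single-valued.
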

\begin{proof}

Let $\phi(u, v)$ be a ${\Sigma}^1_3$ formula in two variables. Recall that there is the $\Pi^1_2$-formula asserting that ``$M$ is sound, simple $x$-mouse''. This formula can be used to obtain a ${\Sigma}^1_3(x)$-good wellorder of $R \cap \Lsharp_x$, uniformly in $x$, and this leads as usual to ${\Sigma}^1_3$-uniformization inside $\Lsharp_x$, but then by the $\Sigma^1_3$-elementarity of Subclaim 2, the same uniformization works in $\Lsharp[G]$.

More explicitly, say $\phi(x, y)$ says $\exists z \psi(x, y, z)$ where $\psi$ is $\Pi^1_2$. Working in $\Lsharp[G]$, we can then uniformize
$$\{(x, y) \mid \Lsharp[G] \models \phi(x, y)\}$$
with the formula $\phi'$, where $\phi'(x, y)$ says ``there is a sound simple $x$-mouse $M$ and $z \in R$ such that:
\begin{itemize}
    \item $y, z \in M$ ($x$ is automatically in $M$),
    \item $M \models \psi(x, y, z)$ and $\langle x, y, z \rangle^\# \in M$,
    \item there is no proper initial segment $M_0$ of $M$ and $y_0, z_0$ such that $(M_0, y_0, z_0)$ have the properties above,
    \item $M$ thinks that $(y, z)$ is the least pair $(y_0, z_0)$ with the properties above (in its canonical ordering).
\end{itemize}
By the $\Pi^1_2$-definability of ``sound simple $x$-mouse'', $\phi'$ is ${\Sigma}^1_3$. And $\phi'$ uniformizes $\phi$. For (i) the minimality of $(M, y, z)$ above (and that all sound simple mice $N, N'$, have either $N \unlhd N'$ or $N' \unlhd N$) guarantees
$$\phi'(x, y) \land \phi'(x, y_0) \implies y = y_0;$$
and (ii)
$$\phi'(x, y) \implies \phi(x, y);$$
as witnessed by the $z$ above, because
$$M \models \psi(x, y, z) \text{ and } \langle x, y, z \rangle^\# \in M;$$
and (iii) if $\phi(x, y_0)$ then by Subclaim 2, $\Lsharp_x \models \exists y \phi(x, y)$, so we get some $(y_{1}, z_{1}) \in \Lsharp_x$ such that $\psi(x, y_{1}, z_{1})$, but then working in $\Lsharp_x$ we can minimize and get the $(M, y, z)$ as above, so $\phi'(x, y)$.
\end{proof}

\subsection{Separating $\Sigma^1_3$-uniformization from $\Sigma^1_4$-uniformization}
We shall prove the second theorem of this article now.
Using $\Lsharp$ as our ground model we force with adding $\omega_1$-many side by side Cohen forcings with finite support. We let $\forceP = \prod_{i < \omega_1} \mathbb{C}$, and let $g$ denote a $\forceP$-generic filter over $\Lsharp$.
We consider the following formula $\phi(x,y) \Leftrightarrow y \notin \Lsharp [x]$. Note that the complexity of $\phi(x,y)$ is $\Pi^1_3$, as $y \notin \Lsharp [x]$ can be stated as 
\[\forall \alpha < \omega_1 (y \notin \Lsharp_{\alpha} [x] ) \]
and the set of $\Lsharp_{\alpha} [x]$ are the set of reals $z$ coding a simple $x$ mouse which is a $\Pi^1_2 (x)$-definable set of reals. So $y \notin \Lsharp [x]$ can be written as $\forall z ( z $ codes a simple  $ x\text{-mouse} \rightarrow y \notin z)$
which is $\Pi^1_3 (x)$. By the last section, $\Lsharp [g]$ will satisfy $\Sigma^1_3$-uniformization. What is left is to argue that $\Sigma^1_4$-uniformization fails there. This is just a repetition of A. L\'{e}vy's argument (see \cite{levy1965definability}). The set defined by 
$\phi(x,y)$ in fact can not be uniformized by any ordinal definable function. 

\section{Questions}
The most natural follow up questions is of course whether we can get the separation of $\Sigma^1_3$ and $\Sigma^1_4$ uniformization without large cardinals. A first idea would be to twist the current proof and add another coding forcing to the iteration which makes the real $R^*$ from the proof of theorem \ref{MainTheorem} itself $\Sigma^1_3$-definable. A similar, convoluted idea was used already in \cite{Ho1} and \cite{hoffelner2025forcinguppersigmauniformizationpresence}. 
This idea here can not work however. For assume such an additional coding could be done, then it can also be applied to the similar problem where we want to force a universe where a good $\Sigma^1_3$-definable wellorder is forced in the presence of a failure of $\Sigma^1_4$ uniformization. Indeed we could just replace the stages in the proof of theorem \ref{MainTheorem} where we work towards $\boldsymbol{\Sigma}^1_3$ uniformization with stages where we work towards a good $\Sigma^1_3$-wellorder. Thus if a $\Sigma^1_3$-definition of $R^*$ is possible, we could simultaneously force $\Sigma^1_4$ uniformization and its failure which is nonsense. Consequently the current proof will have to changed substantially if we want to succeed in answering the natural follow up.
\begin{question}
    Assuming just $Con(\mathsf{ZFC})$, can we produce a universe where $\Sigma^1_3$ uniformization holds and $\Sigma^1_4$ uniformization fails.
    \end{question}
There are techniques now which can force $\Sigma^1_n$-uniformization. Thus the following question seems to be in reach:
\begin{question}
    Fix an integer $n \in \omega$. Are there universes where $\Sigma^1_m$-uniformization holds for each $m \le n$, and $\Sigma^1_m$-uniformization fails for each $m \ge n$.
\end{question}

\section{Acknowledgment}
The author's research was funded in whole by the Austrian Science Fund (FWF) Grant-DOI 10.55776/P37228. For the purpose of open access, the author has applied a CC BY public copyright license to any Author Accepted Manuscript version arising from this submission. He thanks F. Schlutzenberg for discussions on a related topic.
\bibliographystyle{plain}
\bibliography{references}

\end{document}